\let\oldsqrt\sqrt
\def\sqrt{\mathpalette\DHLhksqrt}
\def\DHLhksqrt#1#2{%
\setbox0=\hbox{$#1\oldsqrt{#2\,}$}\dimen0=\ht0
\advance\dimen0-0.2\ht0
\setbox2=\hbox{\vrule height\ht0 depth -\dimen0}%
{\box0\lower0.4pt\box2}}
\newcommand{\R}{\mathbb{R}} 
\newcommand{\N}{\mathbb{N}} 
\newcommand{\osc}{\textnormal{osc}} 
\newcommand{\ov}{\overline}
\renewcommand{\phi}{\varphi}
\newcommand{\cC}{{\mathcal C}}
\newcommand{\cD}{{\mathcal D}}
\newcommand{\cE}{{\mathcal E}}
\newcommand{\cH}{{\mathcal H}}
\newcommand{\cR}{{\mathcal R}}
\newcommand{\eps}{\varepsilon}
\theoremstyle{definition}
\newtheorem{defi}{Definition}[section]
\newtheorem{remark}[defi]{Remark}
\theoremstyle{plain} 
\newtheorem{thm}[defi]{Theorem}
\newtheorem{prop}[defi]{Proposition}
\newtheorem{lemma}[defi]{Lemma}
\newtheorem{cor}[defi]{Corollary}
\theoremstyle{definition}
\numberwithin{equation}{section}
 \title[The eigenvalue problem for the regional fractional laplacian in the small order limit]{The eigenvalue problem for the regional fractional laplacian in the small order limit}
\author[Remi Yvant Temgoua, Tobias Weth]{Remi Yvant Temgoua, Tobias Weth}
\address{Goethe-Universit\"{a}t Frankfurt, Institut f\"{u}r Mathematik.
	Robert-Mayer-Str. 10, D-60629 Frankfurt, Germany, and African Institute for Mathematical Sciences in Senegal (AIMS Senegal), KM 2, Route de Joal, B.P. 1418. Mbour, S\'{e}n\'{e}gal.}
\email{temgoua@math.uni-frankfurt.de,~remi.y.temgoua@aims-senegal.org} 
\address{Goethe-Universit\"{a}t Frankfurt, Institut f\"{u}r Mathematik.
Robert-Mayer-Str. 10, D-60629 Frankfurt, Germany.}
\email{weth@math.uni-frankfurt.de}
\date{\today}
\begin{document}
\maketitle

\begin{abstract}
In this note, we study the asymptotic behavior of eigenvalues and eigenfunctions of the regional fractional Laplacian $(-\Delta)^s_{\Omega}$ as $s\rightarrow0^+.$ Our analysis leads to a study of the regional logarithmic Laplacian, which arises as a formal derivative of regional fractional Laplacians at $s= 0$. 
\end{abstract}
{\footnotesize
\begin{center}
\textit{Keywords.}  Regional fractional Laplacian, Regional Logarithmic Laplacian, Asymptotic behavior, Eigenfunctions, Eigenvalues.
\end{center}
}

\section{Introduction and main results}\label{introduction and main result}
In recent decades, the study of nonlocal operators has been an active area of research in different branches of mathematics. In particular, these operators are used to model problems in which different length scales are involved. In this work, we study the regional fractional Laplace operator of order $s$, which we will denote by $(-\Delta)^s_{\Omega},$ where, here and in the following, $\Omega\subset\R^N$ is a bounded open set with Lipschitz boundary. This operator is known to be the infinitesimal generator of the so-called censored stable L\'{e}vy processes and has received extensive attention in this context in recent years, see e.g. see \cite{bogdan2003censored,chen2010two,guan2005boundary,guan2006integration,guan2006reflected} and the references therein. The censored stable process is a jump process restricted to the underlying open set $\Omega$, so it only involves jumps from points in $\Omega$ to points in $\Omega$. From the point of view of partial differential equations, equations involving the regional fractional Laplacian arise as nonlocal, lower order variants of elliptic second order equations on $\Omega$ with homogeneous Neumann boundary conditions, see e.g. \cite{andreu2010nonlocal} and \cite[Theorem 1.1]{fall-ros-oton2021arxiv}.

If the underlying open set $\Omega$ equals $\R^N,$ then $(-\Delta)^s_{\Omega}$ coincides with the standard fractional Laplacian $(-\Delta)^s$. Recently, Chen and the second author \cite{chen2019dirichlet} have studied Dirichlet problems for the Logarithmic Laplacian operator $L_{\Delta}$, which arises as formal derivative $\partial_s\big|_{s=0}\,(-\Delta)^s$. In particular, they provide a relationship between the first non-zero Dirichlet eigenvalue of $(-\Delta)^s$ on $\Omega$ with that of $L_{\Delta}.$ More precisely, denoting by $\lambda^s_1(\Omega)$ resp. $\lambda^L_1(\Omega)$ the first non-zero Dirichlet eigenvalue of $(-\Delta)^s$ with corresponding $L^2$-normalized eigenfunction $u_s$ and $L_{\Delta}$ with corresponding $L^2$-normalized eigenfunction $\xi_1$, respectively, they have shown that $\lambda^L_1(\Omega)=\frac{d}{ds}|_{s=0}\lambda^s_1(\Omega)$ and $u_s\rightarrow\xi_1$ in $L^2(\Omega)$ as $s\rightarrow0^+.$
Related results for higher eigenvalues and eigenfunctions, including refined uniform regularity results and uniform convergence estimates, have been obtained more recently in \cite{feulefack2020small}. The main aim of this work is to establish analogous results in the case of the regional fractional Laplacian. As a motivation, we mention order-dependent optimization problems arising e.g. in image processing \cite{antil.bartels} and population dynamics \cite{sprekels.valdinoci, pellacci.verzini}. In many of these problems the optimal order $s$ is small. Hence the small order limit $s \to 0^+$ in $s$-dependent operator equations arises as a natural object of interest and has even been studied even in the framework of nonlinear problems recently \cite{hs.saldana}. 

To state our main results, we need to introduce some notation. Let $s\in (0,1).$  The regional fractional Laplacian $(-\Delta)^{s}_{\Omega}u$ of a function $u \in L^1(\Omega)$ is defined at a point $x\in\Omega$ by
\begin{equation}\label{definition-of-fractional-regional-operator}
(-\Delta)^{s}_{\Omega}u(x)=c_{N,s}\cD^s_{\Omega}u(x) 
\end{equation}
with
\begin{equation}\label{definition-D-operator}
\cD^s_{\Omega}u(x)= \text{P.V.}\int_{\Omega}\frac{u(x)-u(y)}{|x-y|^{N+2s}}\ dy=
\lim\limits_{\varepsilon \to 0^+}\int_{\Omega\setminus B_{\varepsilon}(x)}\frac{u(x)-u(y)}{|x-y|^{N+2s}}\ dy,
\end{equation}
provided that the limit exists. Here the normalization constant $c_{N,s}$ coincides with the one of the fractional Laplacian and is given by  
\begin{equation}\label{constant-C-n-s}
c_{N,s}:=\frac{s4^{s}\Gamma(\frac{N+2s}{2})}{\pi^{\frac{N}{2}}\Gamma(1-s)}=\frac{s(1-s)4^{s}\Gamma(\frac{N+2s}{2})}{\pi^{\frac{N}{2}}\Gamma(2-s)}.
\end{equation}
As a consequence, we have
\begin{equation}
\label{eq:def-kappa-s}
(-\Delta)^{s}_{\Omega}u(x) = (-\Delta)^s u(x) - \kappa_{\Omega,s}(x)u(x)\qquad \text{with}\qquad  
\kappa_{\Omega,s}(x)=c_{N,s}\int_{\R^N \setminus\Omega}|x-y|^{-N-2s}\ dy
\end{equation}
for $u \in L^1(\Omega)$ and $x \in \Omega$ whenever the limit in (\ref{definition-D-operator}) exists. Here we identify $u$ with its trivial extension on $\R^N$ to compute $(-\Delta)^s u(x)$.

It is important to note here that the definition of the renormalized operator $\cD^s_{\Omega}$ in (\ref{definition-D-operator}) extends to the case $s=0$. More importantly, we shall see in our first preliminary result that the family of operators $\cD^s_{\Omega}$, $s \in [0,1)$ can be expanded, in a suitable strong sense, as a convergent power series in the fractional order $s$ at $s=0$. 

\begin{thm}\label{first-main-result}
	Let $\Omega$ be a bounded open Lipschitz set in $\R^N,$ and $\alpha\in (0,1)$. Then we have 
	\begin{equation}
	\cD^s_{\Omega}u= \cD^0_{\Omega}u + \sum_{k=1}^\infty s^k \cD_k u~~ \text{for $u\in C^{\alpha}(\overline{\Omega})$  and $s \in (0,\frac{\alpha}{2})$,} \label{1-main-result-eq-n-exp}
	\end{equation}
	where, for $k \in \N$, $\cD_k u \in C(\overline{\Omega})$ is defined by 
	\begin{equation}
	\label{eq:def-D-k}
	[\cD_k u](x)  = (-1)^k2^k \int_{\Omega}\frac{u(x)-u(y)}{|x-y|^{N}} \log^k(|x-y|)\,dy.
	\end{equation}
	Here the series on the RHS of (\ref{1-main-result-eq-n-exp}) converges in $L^\infty(\Omega)$, and the convergence is uniform if $s$ is taken from a compact subset of $[0,\frac{\alpha}{2})$ and $u$ is taken from a bounded subset of $C^\alpha(\overline \Omega)$.
\end{thm}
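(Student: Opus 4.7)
The plan is to start from the pointwise identity
\begin{equation*}
\frac{1}{|x-y|^{N+2s}} \;=\; \frac{1}{|x-y|^N}\, e^{-2s\log|x-y|} \;=\; \sum_{k=0}^\infty \frac{(-2)^k s^k}{k!}\,\frac{\log^k|x-y|}{|x-y|^N},
\end{equation*}
obtained from the Taylor series of the exponential, then multiply by $u(x)-u(y)$ and integrate over $\Omega$, interchanging sum and integral. This formally produces the expansion (\ref{1-main-result-eq-n-exp}), with the $\cD_k$ absorbing the combinatorial constants. The proof therefore reduces to three tasks: (i) justifying the interchange of sum and integral; (ii) verifying that each $\cD_k u$ belongs to $C(\overline\Omega)$; (iii) upgrading pointwise convergence to the asserted uniform $L^\infty$-convergence on compact subsets of $[0,\alpha/2)$ and on bounded families in $C^\alpha(\overline\Omega)$.

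The key quantitative input is the H\"older bound $|u(x)-u(y)| \leq [u]_{C^\alpha}|x-y|^\alpha$. Summing absolute values and using $\sum_{k\geq 0}(2s|\log|x-y||)^k/k! = e^{2s|\log|x-y||}$ yields the global majorant
\begin{equation*}
\sum_{k=0}^\infty \frac{(2s)^k}{k!}\,\frac{|u(x)-u(y)|}{|x-y|^N}|\log|x-y||^k \;\leq\; [u]_{C^\alpha}\,|x-y|^{\alpha-N}\max\bigl(|x-y|^{2s},\,|x-y|^{-2s}\bigr).
\end{equation*}
For $s < \alpha/2$, this majorant is integrable in $y$ over the bounded set $\Omega$, uniformly for $x\in\overline\Omega$: the critical exponent near the diagonal is $\alpha-N-2s>-N$, while away from the diagonal on the bounded set $\Omega$ the factor $\max(|x-y|^{2s},|x-y|^{-2s})$ is controlled by $1+\diam(\Omega)^{2s}$. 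Fubini-Tonelli then legitimises the term-by-term integration and gives (\ref{1-main-result-eq-n-exp}) pointwise.

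For the continuity of $\cD_k u$ I would fix $k$, observe that $|u(x)-u(y)||\log|x-y||^k/|x-y|^N \leq [u]_{C^\alpha}|x-y|^{\alpha-N}|\log|x-y||^k$ with an $L^1$ majorant in $y$ independent of $x\in\overline\Omega$, and apply dominated convergence as $x'\to x$, together with the uniform continuity of $u$ on $\overline\Omega$. For the uniform convergence statement, I would bound the tail $\sum_{k\geq K} s^k \cD_k u$ in $L^\infty(\Omega)$ by the analogous tail of the dominating series; this depends continuously on $s\in[0,\alpha/2)$ and is therefore uniformly small on any compact $s$-interval. Uniformity in $u$ across a bounded family in $C^\alpha(\overline\Omega)$ is automatic, since the majorant is linear in $[u]_{C^\alpha}$.

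The main obstacle is the balance at the diagonal: the weights $|\log|x-y||^k$ grow without bound as $y\to x$, so summability relies entirely on the interplay between the $1/k!$ emerging from the exponential series and the H\"older gain $|x-y|^\alpha$. This interplay is what forces the sharp restriction $s<\alpha/2$, at which point $|x-y|^{\alpha-N-2s}$ just fails to be integrable and the dominated-convergence argument breaks down.
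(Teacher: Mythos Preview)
Your approach is correct and essentially the same as the paper's: both start from the exponential expansion of $|x-y|^{-2s}$, use the H\"older bound to produce an integrable majorant, and invoke dominated convergence to justify the term-by-term integration. The only cosmetic difference is that you resum the absolute series into the closed form $[u]_{C^\alpha}|x-y|^{\alpha-N}\max(|x-y|^{2s},|x-y|^{-2s})$, whereas the paper bounds each term separately via the explicit evaluation $\int_0^1 r^{\alpha-1}|\log r|^k\,dr = k!/\alpha^{k+1}$, obtaining $\int_\Omega|f_k|\le [u]_\alpha c_k$ with $c_k = 2^k/\alpha^{k+1} + R^\alpha(2\log R)^k/k!$; this makes the tail bound $d_j(s)=\sum_{k\ge j}c_k s^k$ (uniform in $x$ and in $s$ on compacta of $[0,\alpha/2)$) immediate, which is precisely the step your sketch leaves slightly implicit.
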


Since
\begin{equation}
\label{eq:def-c-n}
c_{N,s}:=s c_N + o(s) \quad\text{as}~~s\to0^+,\qquad \text{with}\quad c_N:=\pi^{-\frac{N}{2}}\Gamma(\frac{N}{2}),
\end{equation}
the following is a direct corollary of Theorem~\ref{first-main-result}.

\begin{cor}
	\label{first-main-result-cor}
	Let $\Omega \subset \R^N$ be an open bounded Lipschitz set and $\alpha\in (0,1)$. For $u\in C^{\alpha}(\overline{\Omega})$, we then have 
	\begin{equation}
	(-\Delta)^{s}_{\Omega}u=s L^{\Omega}_{\Delta}u  + o(s)~~ \text{in $L^\infty(\Omega)$ as $s\rightarrow0^{+}$,} \label{1-main-result-eq}
	\end{equation}
	where
	\begin{equation}\label{m-delta}
	\bigl[L^{\Omega}_{\Delta}u\bigr](x):=c_{N}\cD^0_{\Omega}u(x)= c_N \int_{\Omega}\frac{u(x)-u(y)}{|x-y|^{N}}\ dy,~~~x\in\Omega.
	\end{equation}
	Moreover, the expansion in (\ref{1-main-result-eq}) is uniform in bounded subsets of $C^\alpha(\overline \Omega)$.
\end{cor}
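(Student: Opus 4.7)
The plan is to substitute the expansion from Theorem~\ref{first-main-result} into the identity $(-\Delta)^{s}_{\Omega}u = c_{N,s}\cD^s_{\Omega}u$ and then invoke the Taylor expansion (\ref{eq:def-c-n}) of $c_{N,s}$ at $s=0$. For $s\in(0,\alpha/2)$ this immediately yields
\begin{equation*}
(-\Delta)^{s}_{\Omega}u = c_{N,s}\cD^0_{\Omega}u + c_{N,s}\sum_{k=1}^\infty s^k \cD_k u.
\end{equation*}
Writing $c_{N,s} = sc_N + r(s)$ with $r(s) = o(s)$ as $s\to 0^+$, and recalling $L^{\Omega}_{\Delta}u = c_N \cD^0_{\Omega}u$ from (\ref{m-delta}), the identity becomes
\begin{equation*}
(-\Delta)^{s}_{\Omega}u = sL^{\Omega}_{\Delta}u + r(s)\cD^0_{\Omega}u + c_{N,s}\sum_{k=1}^\infty s^k \cD_k u.
\end{equation*}
It then suffices to verify that the last two terms are $o(s)$ in $L^\infty(\Omega)$ uniformly over bounded subsets $B\subset C^\alpha(\overline{\Omega})$.

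For the term $r(s)\cD^0_{\Omega}u$, I would use the pointwise bound $|u(x)-u(y)| \le \|u\|_{C^\alpha(\overline{\Omega})}|x-y|^\alpha$, which shows that the integrand defining $\cD^0_{\Omega}u$ is dominated by $\|u\|_{C^\alpha(\overline{\Omega})}|x-y|^{\alpha-N}$. Since $\alpha>0$ and $\Omega$ is bounded, this bound is integrable in $y$ uniformly in $x\in\Omega$, so $\sup_{u\in B}\|\cD^0_{\Omega}u\|_{L^\infty(\Omega)} < \infty$. Multiplying by $r(s)=o(s)$ then gives an $o(s)$ contribution uniformly in $u\in B$.

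For the remaining series term, Theorem~\ref{first-main-result} asserts that the partial sums $\sum_{k=1}^M s^k\cD_k u$ converge in $L^\infty(\Omega)$ uniformly in $(s,u)$ on compact subsets of $[0,\alpha/2)\times B$. In particular the map $s\mapsto \sum_{k=1}^\infty s^k\cD_k u$ is continuous on a neighborhood of $s=0$ with values in $L^\infty(\Omega)$ and vanishes at $s=0$, with the convergence uniform in $u\in B$; hence $\bigl\|\sum_{k=1}^\infty s^k\cD_k u\bigr\|_{L^\infty(\Omega)} \to 0$ as $s\to 0^+$ uniformly on $B$, and multiplying by $c_{N,s} = O(s)$ yields an $o(s)$ contribution. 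There is no genuine analytic obstacle here, as the substantive work is contained in Theorem~\ref{first-main-result}; the only point requiring attention is that the uniform convergence asserted there is strong enough to give the two $o(s)$ remainder bounds uniformly on $B$, and the formulation of the theorem is precisely tailored to provide this.
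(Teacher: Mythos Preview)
Your proof is correct and follows exactly the approach indicated in the paper, which merely states that the corollary follows directly from Theorem~\ref{first-main-result} combined with the expansion~(\ref{eq:def-c-n}) of $c_{N,s}$. You have simply spelled out the details the paper leaves implicit, and the uniform convergence clause in Theorem~\ref{first-main-result} is precisely what is needed to justify the two $o(s)$ remainder estimates uniformly over bounded subsets of $C^\alpha(\overline{\Omega})$.
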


In analogy to the work \cite{chen2019dirichlet}, we call $L^{\Omega}_{\Delta}=c_{N}\cD^0_{\Omega}$ the {\em regional logarithmic Laplacian} on $\Omega$. So Corollary~\ref{first-main-result-cor} states that the nonlocal operator $L^{\Omega}_{\Delta}$ arises as formal derivative $\partial_s\big|_{s=0}\,(-\Delta)^s_{\Omega}$ of regional fractional Laplacians at $s=0.$ As we shall see now in our second main result, this operator arises naturally when studying the asymptotic behavior of eigenvalues and eigenfunctions of $(-\Delta)^s_{\Omega}$ for $s$ close to $0.$ 

\begin{thm}\label{second-main-theorem-reformulation-intro}
  Let $\Omega\subset\R^N$ be a bounded open Lipschitz set, let $n \in \N$, and let $\mu^\Omega_{n,s}$ resp. $\mu^\Omega_{n,0}$ denote the $n$-th eigenvalues of the operators $(-\Delta)^s_\Omega$, $L^{\Omega}_{\Delta}$ in increasing order, respectively. Then we have
  $$
  \mu^\Omega_{n,s} \to 0\quad \text{as $s \to 0^+$}\qquad \text{and}\qquad  \frac{d}{ds}\Big|_{s=0}\,\mu ^{\Omega}_{n,s} = \lim \limits_{s \to 0^+}\frac{\mu^{\Omega}_{n,s}}{s} = \mu^{\Omega}_{n,0}.
 $$
Moreover, if, for some sequence $s_k\rightarrow0^+$, $\{\xi_{n,s_{k}}\}_{k}$ is a sequence of $L^2$-normalized eigenfunctions of $(-\Delta)^{s_k}_\Omega$ corresponding to $\mu^{\Omega}_{n,s_k},$ then $\xi_{n,s_k} \in C(\overline \Omega)$ for every $k \in \N$ and 
	\begin{equation*}
	\xi_{n,s_{k}}\rightarrow\xi_n~~~\text{uniformly in $\overline \Omega$,}
	\end{equation*}
	where $\xi_n $ is an eigenfunction of $L^{\Omega}_{\Delta}$ corresponding to $\mu_{n,0}^\Omega.$    
  \end{thm}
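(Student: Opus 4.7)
The plan is to combine the Courant--Fischer (min-max) characterization of the eigenvalues with the asymptotic expansion of Corollary~\ref{first-main-result-cor} and an $s$-uniform regularity theory for eigenfunctions of $(-\Delta)^s_\Omega$. Denote by $\cE^s_\Omega$ and $\cE^L_\Omega$ the quadratic forms associated with $(-\Delta)^s_\Omega$ and $L^\Omega_\Delta$ respectively. Each operator is selfadjoint with compact resolvent and bounded from below on $L^2(\Omega)$, so
$$
\mu^\Omega_{n,s} = \min_{\dim V = n}\,\max_{u \in V,\,\|u\|_{L^2}=1}\cE^s_\Omega(u,u), \qquad \mu^\Omega_{n,0} = \min_{\dim V = n}\,\max_{u \in V,\,\|u\|_{L^2}=1}\cE^L_\Omega(u,u),
$$
where the minima run over $n$-dimensional subspaces of the respective form domains.

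For the upper bound $\limsup_{s \to 0^+} \mu^\Omega_{n,s}/s \le \mu^\Omega_{n,0}$, I would pick $V_n$ as the span of the first $n$ eigenfunctions of $L^\Omega_\Delta$. By a regularity bootstrap for $L^\Omega_\Delta$ carried out separately, using its integral representation, one has $V_n \subset C^\alpha(\overline\Omega)$ for some $\alpha\in (0,1)$. Applied uniformly on the compact unit $L^2$-sphere of the finite-dimensional space $V_n$, Corollary~\ref{first-main-result-cor} yields
$$
\langle (-\Delta)^s_\Omega u,u\rangle_{L^2} = s\,\langle L^\Omega_\Delta u,u\rangle_{L^2} + o(s)\qquad\text{as } s \to 0^+,
$$
uniformly in $u$, and hence $\max_{u \in V_n,\,\|u\|_{L^2}=1}\cE^s_\Omega(u,u) = s\mu^\Omega_{n,0} + o(s)$. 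Plugging $V_n$ into the min-max for $\mu^\Omega_{n,s}$ delivers the claimed upper bound; in particular $\mu^\Omega_{n,s} \to 0$.

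For the matching lower bound and the eigenfunction convergence, consider a sequence $s_k \to 0^+$ and $L^2$-normalized eigenfunctions $\xi_{n,s_k}$. From the bound $\mu^\Omega_{n,s_k} \le C s_k$ just established, I would produce (i) an $s$-uniform $L^\infty$-bound on $\xi_{n,s_k}$ via a Moser-type iteration adapted to $(-\Delta)^s_\Omega$, and (ii) an $s$-uniform modulus of continuity on $\overline\Omega$ from interior and boundary H\"older estimates for the regional fractional Laplacian with $L^\infty$ right-hand side. Arzel\`a--Ascoli then yields a subsequence along which $\xi_{n,s_k} \to \xi_n$ uniformly on $\overline\Omega$ (and simultaneously in $L^2$, so orthonormality of the first $n$ limits is preserved). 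Testing the eigenvalue equation against $\phi \in C^\alpha(\overline\Omega)$, using selfadjointness to move $(-\Delta)^{s_k}_\Omega$ onto $\phi$, dividing by $s_k$ and applying Corollary~\ref{first-main-result-cor} to expand $(-\Delta)^{s_k}_\Omega\phi = s_k L^\Omega_\Delta\phi + o(s_k)$ in $L^\infty(\Omega)$ identifies $\xi_n$ as an eigenfunction of $L^\Omega_\Delta$ with eigenvalue $\mu := \lim_k \mu^\Omega_{n,s_k}/s_k$. Using the $n$-dimensional space $\mathrm{span}(\xi_1,\dots,\xi_n)$ as a test space in the min-max for $\mu^\Omega_{n,0}$ gives $\mu \ge \mu^\Omega_{n,0}$, hence equality; a standard subsequence argument then upgrades the convergence to the full limit $s \to 0^+$.

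The decisive difficulty will be establishing the $s$-uniform regularity---an $L^\infty$-bound together with equicontinuity up to $\partial\Omega$---for eigenfunctions of $(-\Delta)^s_\Omega$ as $s \to 0^+$. The Neumann-type boundary behavior intrinsic to the regional operator is subtler than in the Dirichlet case treated in \cite{feulefack2020small}, and since $c_{N,s}$ vanishes linearly in $s$ one has to work throughout with the renormalized operator $\cD^s_\Omega$ and track every constant explicitly in $s$ to ensure that the regularity estimates do not degenerate in the small-order limit.
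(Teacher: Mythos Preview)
Your overall architecture---min--max for the limsup, compactness plus limit identification for the liminf and eigenfunction convergence, Arzel\`a--Ascoli for the uniform upgrade---matches the paper's. Two tactical points, however, are handled differently, and in both cases the paper's choice sidesteps a difficulty you would still have to face.

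\textbf{Upper bound.} You propose to test with the span of the first $n$ eigenfunctions of $L^\Omega_\Delta$, which forces you to prove a priori that these lie in $C^\alpha(\overline\Omega)$ before Corollary~\ref{first-main-result-cor} applies. That regularity is not available at this stage of the argument (and the paper never proves H\"older regularity for $L^\Omega_\Delta$-eigenfunctions). The paper instead tests with \emph{arbitrary} $n$-dimensional subspaces $V\subset C^2_*(\overline\Omega):=\{u\in C^2(\overline\Omega):\int_\Omega u=0\}$, applies the expansion uniformly on the compact set $S_V$, and then uses density of $C^2_*(\overline\Omega)$ in the form domain $\mathbb{X}^0(\Omega)$ to recover the full min--max value $\lambda^\Omega_{n,0}$. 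This avoids any regularity input from the limiting operator.

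\textbf{Equicontinuity.} You aim for $s$-uniform interior and boundary H\"older estimates for $(-\Delta)^s_\Omega$. The paper does not obtain these (the existing boundary regularity in \cite{audrito2020neumann,fall2020arxiv,fall-ros-oton2021arxiv} comes with $s$-dependent constants), and there is no indication a uniform H\"older modulus survives as $s\to 0^+$. Instead the paper proves bare equicontinuity of the family $(\xi_{n,s_k})_k$ directly by a contradiction argument (Theorem~\ref{thm-equicontinuity}): assuming a persistent oscillation $\varepsilon>0$ near some boundary or interior point, it builds a barrier-type comparison using a localized Lipschitz bump, exploits the logarithmic divergence of $\int_{\Omega\setminus B_\delta(x)}|x-y|^{-N-2s}\,dy$ (Lemma~\ref{cone-property}), and derives a contradiction to the uniform bound on $\cD^{s_k}_\Omega\xi_{n,s_k}$. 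This is weaker than a H\"older estimate but exactly enough for Arzel\`a--Ascoli. As a further small difference, the paper first secures $L^2$-convergence via a uniform $\mathbb{H}^0(\Omega)$-bound and compact embedding (Proposition~\ref{compact-embedding}), and only then upgrades to $C(\overline\Omega)$; your route goes straight for uniform convergence.

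In short: your plan is sound, and you correctly isolate the hard step. But the paper's substitutions---smooth test subspaces plus density for the limsup, and an oscillation/contradiction argument in place of uniform H\"older estimates---are what make the proof go through without the regularity ingredients you would otherwise need to supply.
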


We stress that, here and in the following, an open bounded set $\Omega \subset \R^N$ will be called a Lipschitz set if every point $p \in \partial \Omega$ has an open neighborhood $N_p \subset \R^N$ with the property that $\partial \Omega \cap N_p$ can be written as the graph of a Lipschitz function after a suitable rotation. In the literature, this is sometimes called a strongly Lipschitz set. 

The main difficulty in the proof of Theorem~\ref{second-main-theorem-reformulation-intro} is the lack of boundedness and regularity estimates for the renormalized regional fractional Laplacian $\cD^s_\Omega$ which are {\em uniform in $s \in (0,1)$.} In fact, even for fixed $s \in (0,1)$, the elliptic boundary regularity theory for this operator has only been developed very recently with regularity estimates containing $s$-dependent constants, see \cite{audrito2020neumann,fall2020arxiv,fall-ros-oton2021arxiv}. For the proof of Theorem~\ref{second-main-theorem-reformulation-intro}, we need to consider uniform $L^\infty$-estimates related to the operator family $\cD^s_\Omega$, $s \in [0,1)$ first. In this context, we note the following result of possible independent interest. 

\begin{thm}\label{new-poisson-problem-cor-intro}
	Let $s \in [0,1)$, let $\Omega\subset\R^N$ be a bounded open Lipschitz set, let $V, f\in L^{\infty}(\Omega)$, and let $u$ be a weak solution of the problem
	\begin{equation}
	\label{bounded-poisson-problem-equation}  
	\cD^{s}_{\Omega}u + V(x)u  =  f~~~\text{in}~~~\Omega.
	\end{equation}
	Then $u \in L^\infty(\Omega)$, and there exists a constant $c_0 = c_0(N,\Omega,\|V\|_{L^\infty(\Omega)},\|f\|_{L^\infty(\Omega)}, \|u\|_{L^2(\Omega)})>0$ independent of $s$ with the property that $\|u\|_{L^\infty(\Omega)} \le c_0$ in $\Omega$.
\end{thm}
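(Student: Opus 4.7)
My plan is to run a Stampacchia-type (De Giorgi) iteration on the bilinear form associated with $\cD^{s}_{\Omega}$, namely
$$
\cE^{s}_{\Omega}(u,v)=\frac{1}{2}\int_{\Omega}\!\int_{\Omega}\frac{(u(x)-u(y))(v(x)-v(y))}{|x-y|^{N+2s}}\,dx\,dy,
$$
in a way that keeps every constant independent of $s\in[0,1)$. A weak solution of \eqref{bounded-poisson-problem-equation} satisfies $\cE^{s}_{\Omega}(u,\phi)+\int_{\Omega}Vu\phi\,dx=\int_{\Omega}f\phi\,dx$ for all admissible test functions $\phi$. Since $-u$ solves the analogous equation with $-f$ in place of $f$, it is enough to bound $u_{+}$ from above.

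I would then test with the truncation $w_k:=(u-k)_+$, $k\ge 0$, and use the pointwise inequality $(a-b)\bigl((a-k)_+-(b-k)_+\bigr)\ge \bigl((a-k)_+-(b-k)_+\bigr)^{2}$ to obtain, with $A_k:=\{u>k\}$,
$$
\cE^{s}_{\Omega}(w_k,w_k)\;\le\; \|f\|_{\infty}\int_{A_k}w_k\,dx+\|V\|_{\infty}\int_{\Omega}(k+w_k)w_k\,dx.
$$
The decisive step toward uniformity is the trivial bound $|x-y|\le\diam\Omega$, which yields
$$
\cE^{s}_{\Omega}(w_k,w_k)\;\ge\;(\diam\Omega)^{-2s}\,\cE^{0}_{\Omega}(w_k,w_k)\;\ge\;c_{\Omega}\,\cE^{0}_{\Omega}(w_k,w_k),
$$
with $c_{\Omega}=\min(1,(\diam\Omega)^{-2})>0$ independent of $s$. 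After this reduction, the entire energy control refers only to the single, $s$-free form $\cE^{0}_{\Omega}$, up to the modest factor $c_{\Omega}$.

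To close the iteration, I would invoke a Sobolev-type embedding for the form space of $\cE^{0}_{\Omega}$ that is strictly stronger than $L^{2}$ -- for instance an embedding into an Orlicz space of type $L^{2}\log L$ produced by a logarithmic Sobolev inequality for the regional logarithmic Laplacian, or equivalently
$$
\|w\|_{L^{p}(\Omega)}\le C_p\bigl(\|w\|_{L^{2}(\Omega)}+\cE^{0}_{\Omega}(w,w)^{1/2}\bigr)\qquad\text{for some fixed }p>2.
$$
Combined with the Chebyshev-type bound $(h-k)^{p}|A_h|\le \|w_k\|_{L^{p}(\Omega)}^{p}$ for $h>k$ and the energy estimate above, this yields a super-linear recursive inequality of the form $|A_h|\le C(\|u\|_{L^{2}}+k+1)^{p}(h-k)^{-p}|A_k|^{1+\delta}$ with $\delta>0$ and $C$ depending only on $N,\Omega,\|V\|_{\infty},\|f\|_{\infty}$. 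Stampacchia's classical lemma then forces $|A_{k_{\infty}}|=0$ for a finite $k_{\infty}$ depending only on the quantities listed in the statement, giving the desired bound on $u_{+}$; the symmetric argument applied to $-u$ completes the proof.

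The main obstacle is precisely this last step: one cannot rely on the classical fractional Sobolev inequality $H^{s}(\Omega)\hookrightarrow L^{2N/(N-2s)}(\Omega)$, since both its constant and its excess exponent $2s/(N-2s)$ degenerate as $s\to 0^{+}$. The reduction to the $s$-free form $\cE^{0}_{\Omega}$ via the diameter comparison is the key device that sidesteps this issue. The remaining technical task -- and the heart of the argument -- is to verify that an $s$-independent Sobolev/Orlicz embedding for $\cE^{0}_{\Omega}$ is available on bounded Lipschitz sets with constants depending only on $N$ and $\Omega$, and that the resulting (possibly only logarithmic) gain over $L^{2}$ is still strong enough to sustain the Stampacchia iteration to its $L^{\infty}$-conclusion.
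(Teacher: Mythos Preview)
Your reduction $\cE^{s}_{\Omega}(w,w)\ge(\diam\Omega)^{-2s}\cE^{0}_{\Omega}(w,w)$ is correct and a nice observation, but the step you flag as ``the heart of the argument'' is a genuine gap, not just a technical chore. The form domain $\mathbb{H}^{0}(\Omega)$ of $\cE^{0}_{\Omega}$ does \emph{not} embed into $L^{p}(\Omega)$ for any fixed $p>2$; at best one obtains an Orlicz embedding of $L^{2}\log L$ type (the operator is of logarithmic order, with Fourier symbol comparable to $\log|\xi|$). With only a logarithmic gain, the recursive inequality you write cannot hold with an exponent $1+\delta$, $\delta>0$, on $|A_{k}|$, and the standard Stampacchia lemma does not close to an $L^{\infty}$ bound. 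So as written, the iteration stalls precisely where you indicate.

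The paper bypasses any Sobolev-type gain entirely by a one-step absorption argument. One splits the kernel at a scale $\delta\in(0,\delta_{0})$ into a near part and a far part. The near-field form $\cE^{\delta}_{s}$ is nonnegative on $(u-c)^{+}$; the far-field contribution, after symmetrising, produces the term
\[
\int_{\Omega}\gamma_{s,\delta}(x)\,u(x)\,(u-c)^{+}(x)\,dx-\int_{\Omega}\kappa_{s,\delta,u^{+}}(x)\,(u-c)^{+}(x)\,dx,
\]
with $\gamma_{s,\delta}(x)=\int_{\Omega\setminus B_{\delta}(x)}|x-y|^{-N-2s}\,dy$. The Lipschitz (uniform cone) property of $\Omega$ gives $\gamma_{s,\delta}(x)\ge C_{0}\log(\delta_{0}/\delta)$ uniformly in $x$ and in $s\in[0,1)$, so by choosing $\delta$ small one makes $\gamma_{s,\delta}-\|V\|_{\infty}\ge 1$. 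Testing with $(u-c)^{+}$ and using $u\,(u-c)^{+}\ge c\,(u-c)^{+}$ then yields, in one stroke,
\[
\bigl(\|f\|_{\infty}+\|\kappa_{s,\delta,u^{+}}\|_{\infty}-c\bigr)\int_{\Omega}(u-c)^{+}\,dx\ge 0,
\]
forcing $u\le c_{0}:=\|f\|_{\infty}+\|\kappa_{s,\delta,u^{+}}\|_{\infty}$, with $\|\kappa_{s,\delta,u^{+}}\|_{\infty}\le\delta^{-N-2}|\Omega|^{1/2}\|u^{+}\|_{L^{2}}$. No iteration, no embedding beyond $L^{2}$; the nonlocal far field itself supplies the coercivity. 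This is the idea your approach is missing.
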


For the notion of weak solution, see Section~\ref{functional analytic}. While the uniform boundedness of the sequence $(\xi_{n,s_{k}})_k$ in Theorem~\ref{second-main-theorem-reformulation-intro} follows rather directly from Theorem~\ref{new-poisson-problem-cor-intro}, it is more difficult to see that this sequence is equicontinous on $\overline{\Omega}$. We shall prove this fact in Theorem~\ref{thm-equicontinuity} below based on a series of relative oscillation estimates and a contradiction argument. 

In view of Theorem~\ref{second-main-theorem-reformulation-intro}, it is natural to ask for upper and lower bounds for the eigenvalues of $L^{\Omega}_{\Delta}$ depending on $\Omega$. This remains an open problem. In the case of the standard fractional Laplacian, upper and lower bounds have been obtained recently in \cite{laptev.weth} by means of Fourier analysis and Faber-Krahn type estimates. We believe that different methods have to be developed to tackle the problem for the regional logarithmic Laplacian.

The article is organized as follows. In Section \ref{preliminary}, we introduce some notation and give the proof of Theorem~\ref{first-main-result}. In Section \ref{functional analytic}, we present the functional analytic framework for Poisson problem for the operator family $\cD^s_{\Omega}$ and the associated eigenvalue problem. In Section~\ref{interior regularity}, we first derive a one-sided uniform estimate for subsolutions of equations of the type $\cD^s_\Omega u + V(x)u = f$ in $\Omega$ with potential $V \in L^\infty(\Omega)$  and source function $f \in L^\infty(\Omega)$. As a corollary of this uniform estimate, we then derive Theorem~\ref{new-poisson-problem-cor-intro}. Finally, in Section~\ref{boundedness-of-eigenfunctions}, we complete the proof of Theorem~\ref{second-main-theorem-reformulation-intro}.
\\\\ 
\textbf{Acknowledgements:} This work is supported by DAAD and BMBF (Germany) within the project 57385104. The authors would like to thank Mouhamed Moustapha Fall and Sven Jarohs for valuable discussions.

\section{Preliminaries and proof of Theorem \ref{first-main-result}}\label{preliminary}
In this section, we first introduce some notation. After that, we will give the proof of Theorem~\ref{first-main-result}.

For an arbitrary subset $A \subset \R^N$, we denote by $|A|$ resp. $\chi_{A}$ the $N$-dimensional Lebesgue measure and the characteristic function of $A$, respectively. Moreover, we let $d_{A}:=\sup\{|x-y|:x,y\in A\}$ denote the diameter of $A$.  For $x\in\R^N,~r>0$, $B_r(x)$ denotes the open ball centered at $x$ with radius $r$, and $B_r := B_r(0)$. Given a function $u:A \rightarrow \R,~A\subset\R^N$, we denote by $u^+:=\max\{u,0\}$ resp. $u^-:=-\min\{u,0\}$ the positive and negative part of $u$, respectively.

Throughout the remainder of the paper, $\Omega \subset \R^N$ always denotes a bounded open Lipschitz set. For a function $u \in C^{\alpha}(\overline{\Omega})$, we put 
$$
[u]_{\alpha,x}:= \sup_{y \in \Omega} \frac{|u(x)-u(y)|}{|x-y|^\alpha} \qquad \text{for $x \in \Omega$}
$$
and
$$
[u]_\alpha:= \sup_{x \in \Omega}[u]_{\alpha,x},\qquad 
\|u\|_{C^\alpha} := \|u\|_{L^\infty(\Omega)} + [u]_\alpha.
$$

We may now give the 
\begin{proof}[Proof of Theorem~\ref{first-main-result}]
	We first note that
	\begin{equation}
	\label{simple-expansion}
	r^{-2s} = e^{-2s \ln r} = \sum_{k=0}^\infty \frac{(-2 \ln r)^k}{k!} s^k \qquad \text{for $r>0$}  
	\end{equation}
	We now fix $u \in C^\alpha(\overline \Omega)$, $x \in \Omega$ and $s \in (0,\frac{\alpha}{2})$. Moreover we define, for $s \in (0,1)$, 
	\begin{equation}
	\label{def-f}
	f: \Omega \setminus \{x\} \to \R,\qquad 
	f(y):= \frac{u(x)-u(y)}{|x-y|^{N+2s}}.
	\end{equation}
	By (\ref{simple-expansion}), we have $f(y) = \sum \limits_{k=0}^\infty s^k f_{k}(y)$ for $y \in \Omega \setminus \{x\}$ with
	\begin{equation*}
	f_{k}: \Omega \setminus \{x\} \to \R,\qquad 
	f_{k}(y):= \frac{2^k}{k!} (u(x)-u(y))  (-\ln |x-y|)^k |x-y|^{-N}.
	\end{equation*}
	Next we choose $R>0$ such that $\Omega \subset B_R(x)$ for every $x \in \Omega$, and we note that
	\begin{align*}
	&\int_{\Omega}|f_k(y)|\,dy \le \frac{2^k}{k!} [u]_\alpha \int_{\Omega} |x-y|^{\alpha-N} |\log^{k} |x-y|\Bigr|\,dy \\
	&\le \frac{2^k}{k!} [u]_\alpha \Bigl( (-1)^{k}\int_{B_1} |z|^{\alpha-N}\log^{k} |z|\,dz
	+ \int_{B_{R} \setminus B_1}|z|^{\alpha-N} \log^{k} |z|\,dz\Bigr)\\ 
	&\le \frac{2^k}{k!} [u]_\alpha |S^{N-1}| \Bigl( (-1)^{k}\int_{0}^1
	r^{\alpha-1}\log^{k}r\,dr
	+ R^{\alpha} \log^{k} R\Bigr)
	\end{align*}
	Since 
	$$
	(-1)^{k}\int_{0}^1
	r^{\alpha-1}\log^{k}r\,dr = \int_0^\infty t^k e^{-\alpha t}dt = \alpha^{-k-1}\int_0^\infty t^k e^{-t}\,dt = \frac{k!}{\alpha^{k+1}},
	$$
	we thus find that
	$$
	\int_{\Omega}|f_k(y)|\,dy \le [u]_\alpha \, c_k \qquad \text{with}\quad
	c_k = \frac{2^k}{\alpha^{k+1}} +  \frac{R^{\alpha} (2 \log R)^k}{k!}. 
	$$
	Since $\limsup \limits_{k \to \infty}\, (c_k)^{\frac{1}{k}} = \frac{2}{\alpha} < \frac{1}{s}$
	by assumption, we conclude that
	$$
	\sum_{k=j}^\infty \Bigl(\int_{\Omega}|f_k(y)|\,dy\Bigr)s^k \le [u]_\alpha 
	d_j(s) \qquad \text{with}\quad d_j(s):= \sum_{k=j}^\infty c_k s^k < \infty
	$$
	for $j \in \N$. Hence the function $g: = \sum \limits_{k=0}^\infty s^k |f_k|$
	is integrable on $\Omega$. Since
	$$
	\Bigl|\sum_{k=0}^j f_k\Bigr| \le g \qquad \text{in $\Omega \setminus \{x\}$ for every $j \in \N$,}
	$$
	it thus follows from the dominated convergence theorem that
	$$
	\cD^s_{\Omega} u(x)= \int_{\Omega} \Bigl(\sum_{k=0}^\infty s^k f_k(y)\Bigr)dy = \sum_{k=0}^\infty s^k \int_{\Omega}f_k(y)\,dy = \cD^0_\Omega u(x) + \sum_{k=1}^\infty s^k [\cD_k u](x), 
	$$
	where $[\cD_k u](x)$ is defined in (\ref{eq:def-D-k}). This holds for every $x \in \Omega$. Moreover,
	$$
	\Bigl|\cD^s_{\Omega} u(x) - \cD^0_\Omega u(x) - \sum_{k=1}^{j-1} s^k [\cD_k u](x)\Bigr| \le
	\sum_{k=j}^\infty \Bigl(\int_{\Omega}|f_k(y)|\,dy\Bigr)s^k \le [u]_\alpha \,
	d_j(s)
	$$
	for $x \in \Omega$, $u \in C^\alpha(\overline \Omega)$, where $d_j(s) \to 0$ as $j \to \infty$. Consequently, the series expansion holds in $L^\infty(\Omega)$ and uniformly for $u$ taken from a bounded subset of $C^\alpha(\overline \Omega)$.
\end{proof}

\section{Functional setting for the Poisson problem and the eigenvalue problem}\label{functional analytic}

In this section, we discuss the variational framework for the study of weak solutions to the Poisson problems
\begin{equation}\label{poisson-problem}
\cD^s_{\Omega}u = f~~~~ \text{in}~~~ \Omega,
\end{equation}
related to the operator family $\cD^s_\Omega$ for $s \in [0,1)$ and $f\in L^{2}(\Omega).$ Here and throughout this section, $\Omega \subset \R^N$ is a bounded open Lipschitz set. The variational framework for this problem is well-known for $s \in (0,1)$, and some aspects have also been studied recently in a setting related to the case $s=0$, see e.g. \cite{correa2018nonlocal}. Since we need additional properties which are not addressed in the present literature, we give a unified account for general $s \in [0,1)$ in the following.

Let us denote by $\langle\cdot,\cdot\rangle_{2}$ the usual scalar product in $L^2(\Omega)$, i.e. 
$\langle u,v\rangle_{2}=\int_{\Omega}uv\ dx$ for $u,v\in L^2(\Omega)$. We define the space $L^2_0(\Omega)$ consisting of functions $u\in L^2(\Omega)$ with zero average over $\Omega$,~i.e.
$$L^2_0(\Omega):=\Big\{u\in L^2(\Omega):\int_{\Omega}u\ dx=0\Big\}.$$
Moreover, we put
$$
\mathbb{H}^s(\Omega):=\Big\{u\in L^2(\Omega)\::\: \int_{\Omega}\int_{\Omega}\frac{(u(x)-u(y))^2}{|x-y|^{N+2s}}\ dxdy < \infty \Big \}.
$$
Then
\begin{equation}
\cE_s(u,v):=\frac{1}{2}\int_{\Omega}\int_{\Omega}\frac{(u(x)-u(y))(v(x)-v(y))}{|x-y|^{N+2s}}\ dxdy
\end{equation}
is well-defined for functions $u,v \in \mathbb{H}^s(\Omega)$. We have the following.
\begin{prop}\label{compact-embedding}
	Let $s \in [0,1)$.\\
	$(i)~\mathbb{H}^s(\Omega)$ is a Hilbert space with inner product
	$$\langle u,v\rangle_{\mathbb{H}^s(\Omega)}:=\langle u,v\rangle_{2}+\cE_s(u,v);$$
	$(ii)$	Moreover, $\mathbb{H}^s(\Omega)$ is compactly embedded into $L^2(\Omega).$
\end{prop}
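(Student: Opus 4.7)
The plan is to handle the two assertions separately, with the bulk of the effort concentrated in the compactness statement at $s = 0$.

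For part (i), symmetry and bilinearity of $\langle \cdot, \cdot\rangle_{\mathbb{H}^s(\Omega)}$ are built into the definition, and positive-definiteness follows from the trivial bound $\|u\|_{\mathbb{H}^s(\Omega)}^2 \geq \|u\|_{L^2(\Omega)}^2$. For completeness, I would take a Cauchy sequence $(u_n) \subset \mathbb{H}^s(\Omega)$; since it is in particular Cauchy in $L^2(\Omega)$, extract an $L^2$-limit $u$ together with a further subsequence $(u_{n_k})$ converging a.e.\ to $u$. Applying Fatou's lemma to the non-negative integrand
\[
\frac{\bigl(u_n(x) - u_{n_k}(x) - u_n(y) + u_{n_k}(y)\bigr)^2}{|x-y|^{N+2s}}
\]
as $k \to \infty$ then yields simultaneously that $u \in \mathbb{H}^s(\Omega)$ and that $u_n \to u$ in the $\mathbb{H}^s(\Omega)$-norm, the latter because $\cE_s(u_n - u_{n_k}, u_n - u_{n_k})$ is uniformly small in $k$ for $n$ sufficiently large, by the Cauchy property.

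For part (ii), the case $s \in (0,1)$ is classical: $\mathbb{H}^s(\Omega)$ coincides with the fractional Sobolev-Slobodeckij space $H^s(\Omega)$ with equivalent norms on a bounded Lipschitz domain, and the compact embedding into $L^2(\Omega)$ is the fractional Rellich-Kondrachov theorem. The delicate case is $s = 0$: a direct splitting of the integral at $|x-y| = 1$ shows $H^s(\Omega) \hookrightarrow \mathbb{H}^0(\Omega)$ (not the converse), so one cannot reduce to the fractional theory. My plan for $s = 0$ is to verify the hypotheses of the Riesz-Fr\'echet-Kolmogorov compactness criterion in $L^2(\Omega)$. The central estimate is an averaging inequality: applying Jensen to
\[
u(x+h) - u(x) = \frac{1}{|B_\rho(x)|}\int_{B_\rho(x)} \bigl[(u(x+h) - u(y)) + (u(y) - u(x))\bigr]\,dy
\]
with $\rho \sim |h|$ and integrating in $x$ produces
\[
\int_{\Omega_h} (u(x+h) - u(x))^2\,dx \;\leq\; C \int\!\!\int_{|x-y| \leq C|h|} \frac{(u(x)-u(y))^2}{|x-y|^N}\,dx\,dy,
\]
where $\Omega_h := \{x \in \Omega : x + h \in \Omega\}$; complementary control over the boundary strip $\Omega \setminus \Omega_h$ uses the Lipschitz regularity of $\partial \Omega$ together with the $L^2$-bound on $u$.

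The principal obstacle is to ensure that the right-hand side above vanishes uniformly as $|h|\to 0$ over a bounded subset of $\mathbb{H}^0(\Omega)$, because uniform $L^1$-boundedness of the family $(u_n(x)-u_n(y))^2/|x-y|^N$ does not by itself preclude concentration near the diagonal. To circumvent this, I would pass to a Fourier reformulation: via a suitable $L^2$-extension to $\R^N$, the $\mathbb{H}^0$-seminorm is comparable, modulo an $L^2(\Omega)$-term, to a logarithmic Sobolev energy of the form $\int|\hat u(\xi)|^2 \log(2+|\xi|)\,d\xi$. Since $\log(2+|\xi|)\to\infty$ as $|\xi|\to\infty$, Plancherel combined with a standard frequency cut-off then yields uniform smallness of the diagonal concentration, and hence the desired compact embedding $\mathbb{H}^0(\Omega) \hookrightarrow L^2(\Omega)$.
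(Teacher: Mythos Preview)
Your argument for part (i) is correct and matches the paper's: Cauchy in $L^2$, a.e.\ convergent subsequence, Fatou twice. For part (ii) with $s>0$ you correctly defer to the classical Rellich--Kondrachov theorem.

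The gap is in your $s=0$ compactness argument, specifically the Fourier step. You assert that, after ``a suitable $L^2$-extension to $\R^N$'', the $\mathbb{H}^0(\Omega)$-seminorm is comparable (modulo $\|u\|_{L^2}^2$) to $\int|\hat u(\xi)|^2\log(2+|\xi|)\,d\xi$. This is exactly where the real difficulty sits. If you extend by zero, the global difference-quotient energy picks up a boundary term
\[
\int_{\Omega} u^2(x)\Bigl(\int_{B_1(x)\setminus\Omega}|x-y|^{-N}\,dy\Bigr)dx,
\]
and the inner integral diverges like $-\log\operatorname{dist}(x,\partial\Omega)$, so controlling it by $\|u\|_{\mathbb{H}^0(\Omega)}^2$ is a genuine \emph{logarithmic boundary Hardy inequality}, not a soft fact. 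Any other ``suitable'' extension (Stein-type, reflection) is built for $H^s$ with $s>0$ and does not obviously map $\mathbb{H}^0(\Omega)$ boundedly into the corresponding global space. So the step you flag as a workaround for diagonal concentration is in fact the heart of the matter, and you have not supplied it.

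For comparison: the paper does not run Kolmogorov--Riesz at all. It proves a separate lemma (Lemma~\ref{space-equivalence}) showing that $\mathbb{H}^0(\Omega)$ is, by trivial extension, isomorphic to the space $\cH(\Omega)$ of \cite{chen2019dirichlet}; the non-trivial direction of that isomorphism is precisely the logarithmic Hardy inequality \cite[Cor.~6.2]{chen2019dirichlet}. Compactness of $\cH(\Omega)\hookrightarrow L^2(\Omega)$ is then quoted from \cite{correa2018nonlocal}. Your route could be completed, but only by proving the same Hardy inequality that the paper invokes --- at which point the Fourier detour becomes unnecessary, since the norm equivalence with $\cH(\Omega)$ already hands you compactness.
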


Before given the proof of this Proposition, we first recall that, for $s \in (0,1)$, the space $\mathbb{H}^s(\Omega)$ coincides, by definition, with the usual fractional Sobolev space $H^s(\Omega)$. For $s \in (0,\frac{1}{2})$ this space can be identified, by trivial extension, with the space $\cH^s_0(\Omega)$ of all functions $u \in H^s(\R^N)$ with $u \equiv 0$ on $\R^N \setminus \Omega$, see e.g. \cite[Chapter 1]{grisvard2011elliptic}. This is a consequence of the fractional boundary Hardy inequality. For the case $s=0$, we have the following related property. 

\begin{lemma}
	\label{space-equivalence}
	Let $\cH(\Omega)$ be the space of all measurable functions $u:\R^N\rightarrow\R$ with $u\equiv0$ on $\R^N\setminus\Omega$ and
	$$\iint_{\substack{x,y\in\R^N\\ |x-y|\leq1}}\frac{(u(x)-u(y))^2}{|x-y|^{N}}\ dxdy<\infty,$$
	endowed with the norm
	$$
	\|u\|_{\cH(\Omega)}=\Bigl(\frac{1}{2}\iint_{\substack{x,y\in\R^N\\ |x-y|\leq1}}\frac{(u(x)-u(y))^2}{|x-y|^{N}}\ dxdy\Bigr)^{\frac{1}{2}},
	$$
	Then, by trivial extension, the space $\mathbb{H}^0(\Omega)$ is isomorphic to $\cH(\Omega)$, so there exists a constant $C>0$ with
	$$
	\frac{1}{C} \|u\|_{\cH(\Omega)} \le \|u\|_{\mathbb{H}^0(\Omega)} \le C \|u\|_{\cH(\Omega)} \qquad
	\text{for $u \in \mathbb{H}^0(\Omega)$,}
	$$
	where we identify a function $u$ on $\Omega$ with its trivial extension to $\R^N$.
\end{lemma}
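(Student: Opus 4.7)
The plan is to bound the two seminorms against each other after splitting the kernel integrals according to location of the arguments and according to $|x-y|\lessgtr 1$. For $u\in L^2(\Omega)$ with trivial extension $\tilde u$ to $\R^N$, the relation $\tilde u\equiv 0$ outside $\Omega$ eliminates the piece of the $\cH(\Omega)$ double integral where both arguments lie in $\R^N\setminus\Omega$, leaving
\begin{equation*}
2\|\tilde u\|_{\cH(\Omega)}^2 = \iint_{\substack{x,y\in\Omega\\ |x-y|\le 1}}\frac{(u(x)-u(y))^2}{|x-y|^N}\,dxdy\ +\ 2\int_\Omega u^2(x)\,\kappa_\Omega(x)\,dx,
\end{equation*}
where $\kappa_\Omega(x):=\int_{B_1(x)\setminus \Omega}|x-y|^{-N}\,dy$. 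Conversely, splitting the $\mathbb{H}^0(\Omega)$ Gagliardo integral on $\Omega\times\Omega$ into the parts $|x-y|\le 1$ and $|x-y|>1$, the first part is dominated by $2\|\tilde u\|_{\cH(\Omega)}^2$, and the second by $2|\Omega|\,\|u\|_{L^2(\Omega)}^2$ since $|x-y|^{-N}\le 1$ there. Combining these two decompositions reduces the lemma to a Hardy-type bound on the boundary term $\int_\Omega u^2\kappa_\Omega\,dx$ in the forward direction, and a Poincar\'e-type bound $\|u\|_{L^2(\Omega)}\le C\|u\|_{\cH(\Omega)}$ in the reverse direction.

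Next I would establish the pointwise estimate $\kappa_\Omega(x)\le C\bigl(1+\log^+(1/\dist(x,\partial\Omega))\bigr)$ for a constant $C=C(\Omega)$. Away from $\partial\Omega$ this is elementary since $\kappa_\Omega$ is bounded there; near $\partial\Omega$, the Lipschitz graph hypothesis yields a uniform exterior cone at each boundary point, so a fixed positive fraction of each annulus $\{r<|x-y|<1\}$ with $r=\dist(x,\partial\Omega)$ lies outside $\Omega$, and integrating $t^{-1}\,dt$ from $\dist(x,\partial\Omega)$ to $1$ produces the claimed logarithmic upper bound. Once this is available, the forward direction reduces to the logarithmic Hardy inequality
\begin{equation*}
\int_\Omega u^2(x)\,\log\!\frac{e}{\dist(x,\partial\Omega)}\,dx \le C\Bigl(\|u\|_{L^2(\Omega)}^2+\iint_{\Omega\times\Omega}\frac{(u(x)-u(y))^2}{|x-y|^N}\,dxdy\Bigr),
\end{equation*}
which I would invoke from the work on the logarithmic Laplacian in \cite{chen2019dirichlet}. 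The reverse bound $\|u\|_{L^2(\Omega)}\le C\|u\|_{\cH(\Omega)}$ follows in turn from the logarithmic Sobolev embedding of $\cH(\Omega)$ into $L^2(\Omega)$ for functions supported in the bounded set $\Omega$, also a consequence of the analysis in \cite{chen2019dirichlet}.

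The hard part will be the logarithmic Hardy inequality displayed above. Unlike the case $s>0$, where a polynomial Hardy weight $\dist(x,\partial\Omega)^{-2s}$ is controlled by an $H^s$-seminorm with kernel $|x-y|^{-N-2s}$, the present setting is the critical endpoint: the weight is only logarithmic and the seminorm kernel $|x-y|^{-N}$ is borderline integrable, so the usual fractional Hardy argument breaks down. I would instead cover a neighborhood of $\partial\Omega$ by dyadic layers $\{2^{-k-1}<\dist(\cdot,\partial\Omega)\le 2^{-k}\}$, extract the $\log$ prefactor from the index $k$, and absorb the resulting sum into the Gagliardo integral by pairing each point of the $k$-th layer with a point of the $(k-1)$-th layer of distance $\sim 2^{-k}$, the kernel $|x-y|^{-N}$ providing a factor of $2^{kN}$ that matches the measure of the pairing region. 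All remaining steps (the kernel splitting, the pointwise bound on $\kappa_\Omega$, and the Sobolev-to-$L^2$ reverse bound) are then routine applications of Lipschitz geometry and the results already cited.
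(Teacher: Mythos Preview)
Your approach is correct and essentially the same as the paper's: both directions rest on the same kernel splitting, the $L^2$--$\cH(\Omega)$ embedding, and the logarithmic boundary Hardy inequality from \cite{chen2019dirichlet}. The paper is slightly more direct, however. The Hardy inequality in \cite[Corollary~6.2]{chen2019dirichlet} is stated with the weight $c_\Omega(x)=\int_{B_1(x)\setminus\Omega}|x-y|^{-N}\,dy$, which is exactly your $\kappa_\Omega(x)$; so the detour through the pointwise bound $\kappa_\Omega(x)\le C(1+\log^+(1/\dist(x,\partial\Omega)))$ and a Hardy inequality with weight $\log(e/\dist(\cdot,\partial\Omega))$, as well as the dyadic-layer sketch you propose for the latter, are all unnecessary. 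Incidentally, the exterior-cone reasoning you give would yield a \emph{lower} bound on $\kappa_\Omega$; the upper bound you need is much simpler, since $B_d(x)\subset\Omega$ for $d=\dist(x,\partial\Omega)$ gives $\kappa_\Omega(x)\le\int_{B_1\setminus B_d}|z|^{-N}\,dz=\omega_{N-1}\log(1/d)$ directly.
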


We note that the space $\cH(\Omega)$ has been introduced in \cite{chen2019dirichlet} as the form domain for Dirichlet problems for the logarithmic Laplacian. 

\begin{proof}
	Let $u \in \mathbb{H}^0(\Omega)$. In the following, $C>0$ stands for a constant which may change its value from line to line but does not depend on $u$. We first note that 
	\begin{align*}
	\cE_0(u,u) &= \frac{1}{2}\int_{\Omega}\int_{\Omega}\frac{(u(x)-u(y))^2}{|x-y|^{N}}\ dxdy\\
	&\le \frac{1}{2}
	\iint_{\substack{x,y\in\R^N\\ |x-y|\leq1}}\frac{(u(x)-u(y))^2}{|x-y|^{N}}\ dxdy + \frac{1}{2}
	\iint_{\substack{x,y\in\Omega\\ |x-y|>1}}\frac{(u(x)-u(y))^2}{|x-y|^{N}}\ dxdy\\
	&\le  \|u\|_{\cH(\Omega)}^2 +  \kappa_{max} \|u\|_{L^2(\Omega)}^2\qquad \text{with}\quad 
	\kappa_{max} :=2 \max_{x \in \overline \Omega} \int_{\Omega \setminus B_1(x)}|x-y|^{-N}\,dy.
	\end{align*}
	Since $\|u\|_{L^2(\Omega)} \le C \|u\|_{\cH(\Omega)}$ for all $u \in \cH(\Omega)$ e.g. by \cite[Lemma 2.7]{FKV13}, we conclude that 
	$$
	\|u\|_{\mathbb{H}^0(\Omega)}^2 = \cE_0(u,u) + \|u\|_{L^2(\Omega)}^2 \le C \|u\|_{\cH(\Omega)}^2.
	$$
	The opposite inequality will be derived from the logarithmic boundary Hardy inequality given in \cite[Corollary 6.2.]{chen2019dirichlet}, which states that there exists a constant $C(\Omega)>0$ with the property that
	\begin{equation}
	\label{eq:log-boundary-hardy}
	\int_{\Omega}c_{\Omega}(x)u^2(x)\ dx \leq C\Big(\frac{1}{2}\int_{\Omega}\int_{\Omega}(u(x)-u(y))^2J(x-y)\ dxdy+\|u\|^2_{L^2(\Omega)}\Big)
	\qquad \text{for $u \in \cH(\Omega)$}
	\end{equation}
	with the kernel $J$ given by $J(z):=c_N\chi_{B_1}(z)|z|^{-N}$ for $z \in \R^N \setminus \{0\}$ and
	$$
	c_{\Omega}(x)=\int_{B_1(x)\setminus\Omega}|x-y|^{-N}\ dy.
	$$
	It follows from (\ref{eq:log-boundary-hardy}) that 
	\begin{align*}
	\|u\|_{\cH(\Omega)}^2 &\le \cE_0(u,u) +  \int_{\Omega}c_{\Omega}(x)u^2(x)\,dx\\  
	&\le \cE_0(u,u) + C \Big(\frac{1}{2}\int_{\Omega}\int_{\Omega}(u(x)-u(y))^2J(x-y)\ dxdy+\|u\|^2_{L^2(\Omega)}\Big)\\
	&\le C \Big(\cE_0(u,u) +\|u\|^2_{L^2(\Omega)}\Big) \le C \|u\|_{\mathbb{H}^0(\Omega)}^2. 
	\end{align*}
	The proof is thus finished.
\end{proof}

We may now complete the 

\begin{proof}[Proof of Proposition~\ref{compact-embedding}]
	The proof is well-known for $s>0$, so we restrict our attention to the case $s=0$ in the following.
	
	(i) Obviously, $\langle\cdot,\cdot\rangle_{\mathbb{H}^0(\Omega)}$ is a scalar product in $\mathbb{H}^0(\Omega).$ In the following, we prove that $\mathbb{H}^0(\Omega)$ is complete for the norm $\|\cdot\|_{\mathbb{H}^0(\Omega)}:=\sqrt{\langle\cdot,\cdot\rangle_{\mathbb{H}^0(\Omega)}}.$ Let $\{u_n\}_{n}$ be a Cauchy sequence with respect to this norm, and set
	$$v_n(x,y):=\frac{1}{\sqrt{2}}(u_n(x)-u_n(y))|x-y|^{-\frac{N}{2}}.$$
	Since $L^2(\Omega)$ is complete, $u_n \to u$ in $L^2(\Omega).$ Passing to a subsequence, we may thus assume that $u_n$ converges a.e. to $u$ on $\Omega$ and therefore $v_n$ converges a.e. on $\Omega\times\Omega$ to the function
	$$v(x,y)=(u(x)-u(y))|x-y|^{-\frac{N}{2}}.$$
	Now, by Fatou's lemma, we have that
	$$
	\int_{\Omega}\int_{\Omega}|u(x)-u(y)|^{2}|x-y|^{-N}\ dxdy\leq\liminf_{n\rightarrow\infty}\int_{\Omega}\int_{\Omega}|v_n(x,y)|^2\ dxdy= \liminf_{n\rightarrow\infty}\|u_n\|_{\mathbb{H}^0(\Omega)}^2<\infty,
	$$
	since the sequence $(u_n)_n$ is bounded in $\mathbb{H}^0(\Omega)$. Hence  $u\in\mathbb{H}^0(\Omega).$ Applying again Fatou's lemma, we find that
        \begin{align*}
        \cE_0(u_{n}-u,u_{n}-u)&=\int_{\Omega}\int_{\Omega}|v_n(x,y)-v(x,y)|^2\ dxdy \leq\liminf_{m\rightarrow\infty}\int_{\Omega}\int_{\Omega}|v_n(x,y)-v_m(x,y)|^2\ dxdy \\
	&= \liminf_{m\rightarrow\infty}\|u_n-u_m\|_{\mathbb{H}^0(\Omega)}^2 \rightarrow0~~~\text{as}~~~n\rightarrow\infty.
	\end{align*}	
	Since we have already seen that
	$u_n \to u$ in $L^2(\Omega)$, it follows that $u_n \to u$ in $\mathbb{H}^0(\Omega).$ Hence, we infer that $\mathbb{H}^0(\Omega)$ is complete and therefore is a Hilbert space.\\	
	(ii) This merely follows from the fact that, as noted in Lemma~\ref{space-equivalence}, the space $\mathbb{H}^0(\Omega)$ is isomorphic to $\cH(\Omega)$ by trivial extension, and the space $\cH(\Omega)$ is compactly embedded into $L^2(\Omega)$ by \cite[Theorem 2.1.]{correa2018nonlocal}.
	
\end{proof}
\begin{remark}\label{density-result}
	(i) The space $C^{\infty}_c(\Omega)$ is dense in $\mathbb{H}^s(\Omega)$ for $s \in (0,\frac{1}{2}]$. For $s \in (0,\frac{1}{2}]$, this is proved e.g. in \cite[Corollary 2.71.]{demengel2012functional}.
	Moreover, for $s =0$, it follows from Lemma~\ref{space-equivalence} and \cite[Theorem 3.1.]{chen2019dirichlet}.
	
	(ii) We have $C^2(\overline{\Omega}) \subset \mathbb{H}^s(\Omega)$ for $s \in [0,1)$ and  
	
	\begin{equation}\label{integration-of-M-delta}
	\int_{\Omega}[\cD^{s}_{\Omega}u]v\ dx=\cE_s(u,v)~~~\text{for all}~~~u\in C^{2}(\overline{\Omega}), v\in\mathbb{H}^s(\Omega).
	\end{equation} 
	Moreover, integrating the Poisson problem \eqref{poisson-problem} over $\Omega$ and using \eqref{integration-of-M-delta} with $v\equiv1\in C^1(\overline{\Omega})$, we see that $f \in L^2_0(\Omega)$
	is a necessary condition for the existence of a solution of  \eqref{poisson-problem}.
\end{remark}

For $s \in [0,1)$,  we consider the closed subspace 
$$
\mathbb{X}^s(\Omega):=\Big\{u\in \mathbb{H}^s(\Omega):\int_{\Omega}u\ dx=0\Big\} \:\subset \:\mathbb{H}^s(\Omega).
$$
By Proposition \ref{compact-embedding}, the embedding $\mathbb{X}^s(\Omega)\hookrightarrow L^2(\Omega)$ is compact. Furthermore, the following uniform Poincar\'{e}-type inequality holds with a constant $C_\Omega>0$:
\begin{equation}\label{poincare-inequality}
\|u\|^2_{L^2(\Omega)} \leq C_\Omega \cE_s(u,u) \qquad \text{for $s \in [0,1)$ and $u \in \mathbb{X}^s(\Omega)$.}
\end{equation}  
Indeed, for $u\in\mathbb{X}^s(\Omega)$ we have $u_{\Omega}:=\frac{1}{|\Omega|}\int_{\Omega}u\ dy=0$ and therefore,  by Jensen's inequality,
\begin{align*}
\int_{\Omega}u^2\ dx&=\int_{\Omega}|u(x)-u_{\Omega}|^2\ dx=\int_{\Omega}\Big|\frac{1}{|\Omega|}\int_{\Omega}(u(x)-u(y))\ dy\Big|^2\ dx\\&\leq\frac{1}{|\Omega|}\int_{\Omega}\int_{\Omega}(u(x)-u(y))^2\ dydx=\frac{1}{|\Omega|}\int_{\Omega}\int_{\Omega}\frac{(u(x)-u(y))^2}{|x-y|^{N+2s}}\cdot|x-y|^{N+2s}\ dydx\\&\leq C_\Omega \cE_s(u,u) \qquad \text{with}~~~C_\Omega:=2\frac{\max\{d^{N}_{\Omega},d^{N+2}_{\Omega}\}}{|\Omega|}.
\end{align*}
We note that, thanks to Proposition~\ref{compact-embedding} and \eqref{poincare-inequality}, $\mathbb{X}^s(\Omega)$ is a Hilbert space with scalar product
given by the bilinear form $(u,v) \mapsto \cE_s(u,v)$.

\begin{defi}
	\label{defi-weak-sol}
	Let $f\in L^2(\Omega)$. We say that a function $u\in\mathbb{H}^s(\Omega)$ is a weak solution of \eqref{poisson-problem} if
	\begin{equation}
	\label{eq:defi-weak-sol-eq}
	\cE_s(u,v)=\int_{\Omega}fv\ dx,~~\text{for all}~~v\in \mathbb{H}^s(\Omega).
	\end{equation}
\end{defi}

\begin{prop}
	\label{poisson-existence-weak-solution}
	For $s \in [0,1)$ and $f \in L^2_0(\Omega)$, there exists a unique weak solution $u\in\mathbb{X}^s(\Omega)$ of \eqref{poisson-problem}. 
\end{prop}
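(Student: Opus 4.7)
The plan is to apply the Riesz representation theorem on the Hilbert space $\mathbb{X}^s(\Omega)$, exploiting the fact established just before the statement that the symmetric bilinear form $\cE_s$ is actually an inner product on $\mathbb{X}^s(\Omega)$ inducing a norm equivalent to the one inherited from $\mathbb{H}^s(\Omega)$. The zero-average condition on $f$ is then used to extend the validity of the variational identity from test functions in $\mathbb{X}^s(\Omega)$ to test functions in the larger space $\mathbb{H}^s(\Omega)$.

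More precisely, I would first check that the linear functional
\[
L: \mathbb{X}^s(\Omega) \to \R, \qquad L(v) := \int_{\Omega} f v\,dx
\]
is continuous on $(\mathbb{X}^s(\Omega), \cE_s)$. This is immediate from the Cauchy--Schwarz inequality in $L^2(\Omega)$ together with the Poincar\'e-type inequality (\ref{poincare-inequality}), giving
\[
|L(v)| \le \|f\|_{L^2(\Omega)} \|v\|_{L^2(\Omega)} \le \sqrt{C_\Omega}\, \|f\|_{L^2(\Omega)}\, \cE_s(v,v)^{1/2}.
\]
Since $\cE_s$ is an inner product on $\mathbb{X}^s(\Omega)$ and this space is complete (as a closed subspace of the Hilbert space $\mathbb{H}^s(\Omega)$ from Proposition~\ref{compact-embedding}(i)), the Riesz representation theorem provides a unique $u \in \mathbb{X}^s(\Omega)$ such that
\[
\cE_s(u,v) = \int_{\Omega} f v\,dx \qquad \text{for all } v \in \mathbb{X}^s(\Omega).
\]

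Next I would upgrade the class of admissible test functions from $\mathbb{X}^s(\Omega)$ to the full space $\mathbb{H}^s(\Omega)$ required by Definition~\ref{defi-weak-sol}. Given $v \in \mathbb{H}^s(\Omega)$, write $v = \tilde v + v_\Omega$ with $v_\Omega := \frac{1}{|\Omega|}\int_\Omega v\,dx$ constant and $\tilde v := v - v_\Omega \in \mathbb{X}^s(\Omega)$. Since the kernel of $\cE_s$ contains the constants, $\cE_s(u,v) = \cE_s(u,\tilde v)$; since $f \in L^2_0(\Omega)$, also $\int_\Omega f v\,dx = \int_\Omega f \tilde v\,dx + v_\Omega \int_\Omega f\,dx = \int_\Omega f \tilde v\,dx$. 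Combining these with the identity already established for $\tilde v$ yields (\ref{eq:defi-weak-sol-eq}) for arbitrary $v \in \mathbb{H}^s(\Omega)$.

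For uniqueness within $\mathbb{X}^s(\Omega)$, if $u_1, u_2$ are two such solutions, their difference $w = u_1 - u_2 \in \mathbb{X}^s(\Omega)$ satisfies $\cE_s(w,v) = 0$ for every $v \in \mathbb{H}^s(\Omega)$; testing with $v = w$ and invoking (\ref{poincare-inequality}) forces $\|w\|_{L^2(\Omega)} = 0$. I do not expect any serious obstacle: the core is just Riesz representation, and the only mild subtlety is the constant-test-function step, which is precisely why the compatibility assumption $f \in L^2_0(\Omega)$ is imposed (and is in fact necessary, as already noted in Remark~\ref{density-result}(ii)).
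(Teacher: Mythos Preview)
Your proof is correct and follows essentially the same route as the paper: apply the Riesz representation theorem on $(\mathbb{X}^s(\Omega),\cE_s)$ to obtain the solution, then use the zero-mean condition on $f$ and the fact that constants lie in the kernel of $\cE_s$ to extend the variational identity to all of $\mathbb{H}^s(\Omega)$. You are in fact more explicit than the paper, spelling out the continuity of the linear functional, the decomposition $v=\tilde v+v_\Omega$, and the uniqueness argument, all of which the paper leaves implicit.
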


\begin{proof}
	Let $f \in L^2_0(\Omega)$. Since $\mathbb{X}^s(\Omega)$ is a Hilbert space with scalar product $\cE_s$, the Riesz representation theorem implies that there exists $u \in \mathbb{X}^s(\Omega)$ with
	$$\cE_s(u,v)=\int_{\Omega}fv\ dx,~~\text{for all}~~v\in \mathbb{X}^s(\Omega).$$
	Moreover, since $f \in L^2_0(\Omega)$, it follows that (\ref{eq:defi-weak-sol-eq}) also holds for constant functions $v \in \mathbb{H}^s(\Omega).$ Hence (\ref{eq:defi-weak-sol-eq}) holds for every $v \in \mathbb{H}^s(\Omega)$, and thus $u$ is a weak solution of 
	\eqref{poisson-problem}.   
\end{proof}

Our next aim is to study, for $s \in [0,1)$, the eigenvalue problem related to $\cD^s_{\Omega}$, that is the problem
\begin{equation}\label{eigenvalue-problem-relate-to-m}
\cD^s_{\Omega}u=\lambda u~~\text{in}~~\Omega.
\end{equation}
We consider corresponding eigenfunctions in weak sense i.e., a weak solution of \eqref{poisson-problem} with $f=\lambda u.$

\begin{prop}
	For every $s \in [0,1)$, the problem (\ref{eigenvalue-problem-relate-to-m}) admits a sequence of eigenvalues
	\begin{equation}\label{regional-eigenvalues}
	0=\lambda^{\Omega}_{0,s}<\lambda^{\Omega}_{1,s}\leq\lambda^{\Omega}_{2,s}\leq\cdots\leq\lambda^{\Omega}_{k,s}\leq\cdots\rightarrow\infty
	\end{equation}
	counted with multiplicity and a corresponding sequence of eigenfunctions which forms an orthonormal basis of $L^2(\Omega)$. Moreover, we have:
	\begin{itemize}
		\item[(i)] The eigenspace corresponding to $\lambda^{\Omega}_{0,s}=0$ is one-dimensional and consists of constant functions.
		\item[(ii)] The first non-zero eigenvalue of $\cD^s_{\Omega}$ in $\Omega$ is characterized by
		\begin{equation}\label{first-eigenvalue-of-the-regional-laplacian}
		\lambda^{\Omega}_{1,s}:=\inf\Big\{\frac{\cE_s(u,u)}{\|u\|^2_{L^2(\Omega)}}: u\in\mathbb{X}^s(\Omega) \setminus \{0\}\Big\}=\inf\Big\{\cE_s(u,u):u\in\mathbb{X}^s(\Omega),\|u\|_{L^2(\Omega)}=1\Big\}.
		\end{equation}
	\end{itemize}
\end{prop}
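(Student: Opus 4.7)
My plan is to invert $\cD^s_\Omega$ on the orthogonal complement $L^2_0(\Omega)$ of the constants and then invoke the spectral theorem for compact self-adjoint operators. The two ingredients from Section~\ref{functional analytic} that I would rely on are the Poisson solvability of Proposition~\ref{poisson-existence-weak-solution} and the compact embedding $\mathbb{X}^s(\Omega)\hookrightarrow L^2(\Omega)$ of Proposition~\ref{compact-embedding}(ii). Concretely, I would define the solution operator $T_s:L^2_0(\Omega)\to L^2_0(\Omega)$ by $T_s f:=u$, where $u\in \mathbb{X}^s(\Omega)$ is the unique weak solution of (\ref{poisson-problem}). Testing the weak formulation against $v=u$ and then combining Cauchy--Schwarz with the Poincar\'e inequality \eqref{poincare-inequality}, I would obtain $\cE_s(u,u)\le C_\Omega\|f\|^2_{L^2(\Omega)}$, hence $\|T_s f\|_{L^2(\Omega)}\le C_\Omega\|f\|_{L^2(\Omega)}$ together with a uniform bound on $\|T_s f\|_{\mathbb{X}^s(\Omega)}$.

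Next I would verify that $T_s$ is compact, self-adjoint and injective on $L^2_0(\Omega)$. Compactness is immediate from the bound just derived combined with Proposition~\ref{compact-embedding}(ii). Self-adjointness follows from the symmetry of $\cE_s$: for $u=T_s f$ and $v=T_s g$ one gets $\langle f,T_s g\rangle_2=\cE_s(u,v)=\cE_s(v,u)=\langle g,T_s f\rangle_2$. Positivity $\langle T_s f,f\rangle_2=\cE_s(T_s f,T_s f)\ge 0$ is clear, and vanishing of this quantity forces $T_s f$ to be a.e.~constant, hence identically zero (since it has zero mean), whereupon $\langle f,w\rangle_2=\cE_s(0,w)=0$ for every $w\in\mathbb{H}^s(\Omega)$ and thus $f=0$ by density of $C_c^\infty(\Omega)\subset\mathbb{H}^s(\Omega)$ in $L^2(\Omega)$.

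The spectral theorem for compact self-adjoint positive operators then furnishes a decreasing sequence $\mu_1\ge\mu_2\ge\cdots>0$ of eigenvalues of $T_s$ with $\mu_k\to 0^+$ and an associated Hilbert basis $\{\phi_k\}_{k\ge 1}$ of $L^2_0(\Omega)$ consisting of eigenfunctions. Setting $\lambda^{\Omega}_{k,s}:=1/\mu_k$ and adjoining $\phi_0:=|\Omega|^{-1/2}$ with $\lambda^{\Omega}_{0,s}=0$ produces the sequence \eqref{regional-eigenvalues} and the desired orthonormal basis of $L^2(\Omega)=\mathrm{span}\{\phi_0\}\oplus L^2_0(\Omega)$. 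The identity $\cE_s(\phi_k,v)=\lambda^{\Omega}_{k,s}\langle\phi_k,v\rangle_2$ for all $v\in\mathbb{H}^s(\Omega)$ is then a direct consequence of the definition of $T_s$, so each $\phi_k$ is indeed a weak eigenfunction of $\cD^s_\Omega$ in the sense of Definition~\ref{defi-weak-sol}.

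For (i), any weak eigenfunction $u$ corresponding to eigenvalue $0$ satisfies $\cE_s(u,u)=0$; since the integrand is pointwise nonnegative, this forces $u(x)=u(y)$ for a.e.~$(x,y)\in\Omega\times\Omega$, so $u$ is a.e.~constant irrespective of the connectedness of $\Omega$ (pairs of points in distinct components still contribute to $\cE_s$). For (ii), the Rayleigh-quotient formula for $\lambda^{\Omega}_{1,s}$ is the classical minimax identity $\lambda^{\Omega}_{1,s}=1/\mu_1$ applied to the compact self-adjoint operator $T_s$ on the Hilbert space $(\mathbb{X}^s(\Omega),\cE_s)$, with the infimum attained at $\phi_1$. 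The argument is essentially routine once the framework of Section~\ref{functional analytic} is in place; the only point requiring a little care is the injectivity of $T_s$ and the identification of $\ker(\cD^s_\Omega)$, which together guarantee that $0$ is a simple eigenvalue.
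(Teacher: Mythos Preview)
Your argument is correct and follows the same basic route as the paper: both rely on the compactness of the embedding $\mathbb{H}^s(\Omega)\hookrightarrow L^2(\Omega)$ and the nonnegativity and symmetry of $\cE_s$ to invoke standard spectral theory, and both identify the kernel with the constants via $\cE_s(u,u)=0\Rightarrow u$ constant. The paper is terser about the spectral part (it simply says the eigenvalue sequence ``follows in a standard way''), whereas you spell out the inverse-operator construction $T_s$ explicitly; this is a matter of detail, not of strategy.

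The one place where the two proofs genuinely differ is part~(ii). You deduce the Rayleigh-quotient characterization from the spectral decomposition of $T_s$ (implicitly expanding $u\in\mathbb{X}^s(\Omega)$ in the eigenbasis and using $\cE_s(\phi_j,\phi_k)=\lambda^\Omega_{j,s}\delta_{jk}$). The paper instead argues directly by the variational method: it first observes the easy inequality $\lambda^{\Omega}_{1,s}\ge\inf\{\cE_s(u,u):u\in\mathbb{X}^s(\Omega),\ \|u\|_{L^2}=1\}$, then uses compactness and weak lower semicontinuity to produce a minimizer, and applies a Lagrange-multiplier argument to show the minimizer is an eigenfunction with eigenvalue $\le\lambda^\Omega_{1,s}$, forcing equality. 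Your route is slightly slicker once the spectral theorem is in hand; the paper's route has the advantage of being self-contained and of making the existence of a minimizer (hence of an eigenfunction attaining the infimum) completely explicit. One small wording issue: your phrase ``$T_s$ on the Hilbert space $(\mathbb{X}^s(\Omega),\cE_s)$'' is a little imprecise, since $T_s$ was defined on $L^2_0(\Omega)$; what you actually need is that the eigenfunctions $\{\phi_k\}$ form an $\cE_s$-orthogonal basis of $\mathbb{X}^s(\Omega)$, which follows from $\cE_s(\phi_j,\phi_k)=\lambda^\Omega_{j,s}\langle\phi_j,\phi_k\rangle_2$ and injectivity.
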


For $s \in (0,1)$, the proof of the characterization~(\ref{first-eigenvalue-of-the-regional-laplacian}) can be found in \cite[Theorem 3.1.]{del2015first}. For the reader's convenience, we briefly sketch a proof which covers the case $s=0$. 

\begin{proof}
	We first note that it follows in a standard way from Proposition~\ref{compact-embedding} and the nonnegativity and symmetry of the quadratic form $\cE_s$ that (\ref{eigenvalue-problem-relate-to-m}) admits a sequence of eigenvalues
	$$
	0 \le \lambda^{\Omega}_{0,s} \le \lambda^{\Omega}_{1,s}\leq\lambda^{\Omega}_{2,s}\leq\cdots\leq\lambda^{\Omega}_{k,s}\leq\cdots\rightarrow\infty
	$$
	Moreover, by definition, a function $u \in \mathbb{H}^s(\Omega)$ is an eigenfunction of (\ref{eigenvalue-problem-relate-to-m}) corresponding to the eigenvalue $\lambda=0$ if and only if $\cE_s(u,v)=0$ for every $v \in \mathbb{H}^s(\Omega)$, and this is true if and only if $u$ is constant. Hence we have $\lambda^{\Omega}_{0,s}=0$ with a one-dimensional eigenspace consisting of constant functions, and thus $\lambda^{\Omega}_{1,s}>0$. To prove~(\ref{first-eigenvalue-of-the-regional-laplacian}), we first note that 
	\begin{equation}\label{first-eigenvalue-of-the-regional-laplacian-proof}
	\lambda^{\Omega}_{1,s} \ge \inf\Big\{\cE_s(u,u):u\in\mathbb{X}^s(\Omega),\|u\|_{L^2(\Omega)}=1\Big\}
	\end{equation}
	since every eigenfunction $u$ corresponding to $\lambda^{\Omega}_{1,s}>0$ is $L^2$-orthogonal to constant functions and therefore contained in $\mathbb{X}^s(\Omega)$, whereas $\cE_s(u,u)=\lambda^{\Omega}_{1,s}$ if $\|u\|_{L^2(\Omega)}=1$.
	
	Moreover, it follows from the compactness of the embedding $\mathbb{H}^s(\Omega) \hookrightarrow L^2(\Omega)$ and the weak lower semicontinity of the functional $u \mapsto \cE_s(u,u)$ on $\mathbb{H}^s(\Omega)$ that the infimum on the RHS of (\ref{first-eigenvalue-of-the-regional-laplacian-proof}) is attained by a function $u \in \mathbb{X}^s(\Omega)$ with $\|u\|_{L^2(\Omega)}=1$.
	By Lagrange multiplier rule, we can thus find $\lambda\in\R$ such that
	$$\cE_s(u,v)=\lambda\int_{\Omega}u v\ dx~~~\text{for all}~~v\in\mathbb{X}^s(\Omega).$$
	As in the proof of Proposition~\ref{poisson-existence-weak-solution}, it then follows that $u$ weakly solves $\cD^s_\Omega u = \lambda u$, which implies that $\lambda = \lambda \|u\|_{L^2(\Omega)}^2 = \cE_s(u,u) \le 	\lambda^{\Omega}_{1,s}$ by (\ref{first-eigenvalue-of-the-regional-laplacian-proof}). Moreover, $\lambda>0$ since $u$ is non-constant. Since $\lambda^{\Omega}_{1,s}$ is the smallest positive eigenvalue by definition, it thus follows that $\lambda= \lambda^{\Omega}_{1,s}$, and hence we have equality in (\ref{first-eigenvalue-of-the-regional-laplacian-proof}).
\end{proof}

\begin{remark}
	\label{higher-eigenvalues}{\rm 
		In a standard way, it can also be shown that, for $s \in [0,1)$ the higher eigenvalues $\lambda^{\Omega}_{n,s}$, $n \in \N$ are variationally characterized as 
		\begin{equation}
		\label{eq:var-char-n}
		\lambda^{\Omega}_{n,s} = \inf_{V \in V_n^s} \sup_{u \in S_V}\cE_s(u,u).
		\end{equation}
		Here $V_n^s$ denotes the family of $n$-dimensional subspaces of $\mathbb{X}^s(\Omega)$ and $S_V:= \{u \in V\::\: \|u\|_{L^2(\Omega)} = 1\}$ for $V \in V_n^s$.
	}
\end{remark}

\section{Uniform bounds for weak subsolutions}\label{interior regularity}

In this section we establish uniform boundedness of weak solutions of problem \eqref{poisson-problem} in the case when $f\in L^{\infty}(\Omega)$. Since we are also interested in uniform bounds on $L^2$-normalized eigenfunctions of $\cD^s_\Omega$ independent of $s \in [0,1)$, it is in fact necessary to consider a generalization of \eqref{poisson-problem} involving $L^\infty$-potentials $V$.
This is the content of Theorem~\ref{new-poisson-problem-cor-intro}, which we shall derive in this section as an immediate consequence of the following more general result on subsolutions.

\begin{thm}\label{new-poisson-problem}
	Let $s \in [0,1)$, let $\Omega\subset\R^N$ be a bounded open Lipschitz set, let $V, f\in L^{\infty}(\Omega)$, and let $u\in\mathbb{H}^s(\Omega)$ be a weak subsolution of the problem 
	\begin{equation}
	\label{bounded-sub-poisson-problem-equation-0}  
	\cD^{s}_{\Omega}u +V(x)u =f~~~\text{in}~~~\Omega,
	\end{equation}
	i.e., we have
	\begin{equation}
	\label{bounded-sub-poisson-problem-equation}  
	\cE^{s}(u,\phi) + \int_{\Omega}V(x)u \phi\,dx  \le \int_{\Omega} f \phi\,dx \qquad \text{for all $\phi \in \mathbb{H}^s(\Omega)$, $\phi \ge 0$.}
	\end{equation}
	Then there exists a constant $c_0 = c_0(N,\Omega,\|V\|_{L^\infty(\Omega)},\|f\|_{L^\infty(\Omega)}, \|u^+\|_{L^2(\Omega)})>0$ independent of $s$ with the property that $u \le c_0$ in $\Omega$.
\end{thm}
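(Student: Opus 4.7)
The plan is to prove this via a De Giorgi-type level-set iteration, with every step's constants chosen to be uniform in $s \in [0,1)$.

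First, for each level $k \ge 0$, set $w_k := (u-k)^+ \in \mathbb{H}^s(\Omega)$ and $A_k := \{x \in \Omega : u(x) > k\}$. A direct case analysis yields the pointwise inequality $(u(x)-u(y))(w_k(x)-w_k(y)) \ge (w_k(x)-w_k(y))^2$. Testing~(\ref{bounded-sub-poisson-problem-equation}) with $\phi = w_k \ge 0$ and using that $u = w_k + k$ on $A_k$ while $w_k \equiv 0$ elsewhere, I would obtain the basic energy inequality
$$
\cE_s(w_k,w_k) \le (\|f\|_{L^\infty(\Omega)} + k\|V\|_{L^\infty(\Omega)}) \|w_k\|_{L^1(\Omega)} + \|V\|_{L^\infty(\Omega)} \|w_k\|_{L^2(\Omega)}^2.
$$

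The key step is to extract from $\cE_s(w_k,w_k)$ an integrability gain for $w_k$ that is \emph{uniform in} $s$. The elementary inequality $|x-y|^{-N-2s} \ge |x-y|^{-N}$ valid whenever $|x-y| \le 1$, together with the trivial bound $\iint_{\Omega\times\Omega,\ |x-y|>1}(w(x)-w(y))^2\,dx\,dy \le 4|\Omega|\|w\|_{L^2(\Omega)}^2$, yields the comparison
$$
\cE_s(w,w) + C_1(\Omega)\|w\|_{L^2(\Omega)}^2 \ge \cE_0(w,w) \qquad \text{for all } s \in [0,1),\ w \in \mathbb{H}^s(\Omega),
$$
reducing matters uniformly to an analysis of the log-Laplacian form $\cE_0$. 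For $\cE_0$, an Orlicz / logarithmic Sobolev-type inequality on the form domain $\cH(\Omega)$ (in the spirit of the logarithmic boundary Hardy inequality used in Lemma~\ref{space-equivalence} and the estimates in \cite{chen2019dirichlet}) provides the remaining integrability gain.

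Combining this gain with Hölder's inequality $\|w_k\|_{L^1(\Omega)} \le |A_k|^{1/2}\|w_k\|_{L^2(\Omega)}$ and choosing a starting level $k_0$ large enough, via the Chebyshev bound $|A_k| \le k^{-2}\|u^+\|_{L^2(\Omega)}^2$, to absorb the $\|V\|_{L^\infty}\|w_k\|_{L^2(\Omega)}^2$ term using the growing log factor, one arrives at a Stampacchia-type recursion
$$
(h-k)|A_h| \le \|w_k\|_{L^1(\Omega)} \le \Psi(k,|A_k|) \qquad (h > k \ge k_0),
$$
in which $\Psi(k,\cdot)$ carries a super-linear decay in $|A_k|$ as $|A_k| \to 0$, with constants independent of $s$. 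A careful choice of level sequence then forces $|A_k| = 0$ beyond an explicit level $k^* = k^*(N,\Omega,\|V\|_{L^\infty},\|f\|_{L^\infty},\|u^+\|_{L^2(\Omega)})$, yielding $u \le k^*$ in $\Omega$.

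The main obstacle is the absence of any Sobolev-type embedding uniform in $s \in [0,1)$: the classical fractional embedding $\mathbb{H}^s(\Omega) \hookrightarrow L^{2N/(N-2s)}(\Omega)$ degenerates as $s \to 0^+$, both in the exponent (which tends to $2$) and in the constant (which blows up). One is therefore forced to work with the much weaker Orlicz/logarithmic gain inherent to $\cE_0$, which renders the iteration considerably more delicate than in the case $s$ bounded away from $0$ and requires a careful choice of level increments. A secondary technical point is that $w_k$ need not vanish on $\partial\Omega$, so transferring the embedding from $\cH(\Omega)$ (the form domain via trivial extension in Lemma~\ref{space-equivalence}) back to $\mathbb{H}^0(\Omega)$—and thence to $\mathbb{H}^s(\Omega)$ via the comparison above—must be done via the logarithmic boundary Hardy inequality used in the proof of that lemma.
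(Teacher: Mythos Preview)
Your De Giorgi/Stampacchia strategy is the natural reflex, but the step you yourself flag as the ``main obstacle'' is a genuine gap, not just a technicality you can fill in later. The logarithmic/Orlicz gain available from $\cE_0$ is extremely weak: the classical Stampacchia lemma needs a recursion $\varphi(h)\le C(h-k)^{-\alpha}\varphi(k)^\beta$ with $\beta>1$ to force $\varphi\equiv 0$ at a finite level, whereas a purely logarithmic improvement of integrability yields at best $\beta=1$ times a factor of type $(\log 1/\varphi(k))^{-\gamma}$, and you do not show that such a recursion terminates at a finite level with a constant independent of the starting data. Your appeal to the boundary Hardy inequality in Lemma~\ref{space-equivalence} does not help here, since that inequality only compares two equivalent norms on $\mathbb{H}^0(\Omega)$ and $\cH(\Omega)$ and produces no integrability gain whatsoever. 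As written, the iteration is not closed.

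The paper bypasses all of this with a one-step argument that exploits the nonlocal structure directly rather than trying to extract a Sobolev-type gain. Split the kernel at a scale $\delta$: the far-field piece contributes $\int_\Omega \gamma_{s,\delta}(x)\,u\,\phi\,dx-\int_\Omega\kappa_{s,\delta,u}(x)\,\phi(x)\,dx$ with $\gamma_{s,\delta}(x)=\int_{\Omega\setminus B_\delta(x)}|x-y|^{-N-2s}\,dy$. By the uniform cone property of the Lipschitz domain (Lemma~\ref{cone-property}), $\gamma_{s,\delta}\ge C_0\log(\delta_0/\delta)$ \emph{uniformly in $s\in[0,1)$}; fixing $\delta$ once so that $C_0\log(\delta_0/\delta)-\|V\|_{L^\infty}\ge 1$, the far field acts as a large positive zero-order term that swallows $V$. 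Testing once with $\phi_c=(u-c)^+$, the near-field form $\cE_s^\delta(u,\phi_c)$ is $\ge 0$ by exactly your pointwise inequality, and one reads off directly that $\phi_c\equiv 0$ as soon as $c>\|f\|_{L^\infty}+\|\kappa_{s,\delta,u^+}\|_{L^\infty}\le \|f\|_{L^\infty}+\delta^{-N-2}\sqrt{|\Omega|}\,\|u^+\|_{L^2(\Omega)}$. No iteration, no embedding, and the only $s$-dependence is through $\delta^{-2s}\le \delta^{-2}$. The mechanism is not ``gain of integrability'' at all; it is that the operator already contains an arbitrarily large zero-order term once you peel off the short-range part of the kernel.
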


As noted above, Theorem~\ref{new-poisson-problem-cor-intro} immediately follows by applying Theorem~\ref{new-poisson-problem} to $u$ and $-u$, noting that $-u$ is a weak subsolution of the equation 	\eqref{bounded-sub-poisson-problem-equation-0}  
 with $f$ replaced by $-f$. For the proof of Theorem~\ref{new-poisson-problem}, we need the following preliminary estimate.

\begin{lemma}\label{cone-property}
	Let $\Omega \subset \R^N$ be a bounded open Lipschitz set. Then there exist constants $C_0=C_0(N,\Omega)>0$ and $\delta_0= \delta_0(N,\Omega) \in (0,1)$ with the property that
	\begin{equation}
	\label{cone-estimate}
	\int_{\Omega\setminus B_{\delta}(x)}|x-y|^{-N-2s}\ dy \geq C_0\, \log \frac{\delta_0}{\delta} \qquad \text{for all}~~ \delta \in (0,\delta_0), x\in \overline{\Omega}, s\in [0,1). 
	\end{equation}
\end{lemma}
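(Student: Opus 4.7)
The plan is to invoke the \emph{uniform interior cone condition} that every bounded strongly Lipschitz set enjoys: there exist a height $r_0 > 0$ and a reference open cone $\cK \subset B_{r_0}$ with vertex at the origin and strictly positive solid angle $\omega > 0$ (the $(N-1)$-dimensional surface measure of $\cK \cap \partial B_1$) such that, for every $x \in \overline{\Omega}$, one can find a rigid motion $T_x$ of $\R^N$ sending the origin to $x$ with $T_x(\cK) \subset \overline{\Omega}$. This is precisely the geometric content of the strong Lipschitz assumption and is exactly why the authors emphasize this hypothesis immediately below the statement of Theorem~\ref{second-main-theorem-reformulation-intro}.

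Armed with such a cone, I would set $\delta_0 := \min\{r_0, 1\}$ and, for $\delta \in (0, \delta_0)$ and $x \in \overline{\Omega}$, estimate
$$\int_{\Omega\setminus B_\delta(x)}|x-y|^{-N-2s}\,dy \;\ge\; \int_{T_x(\cK)\,\cap\, (B_{\delta_0}(x)\setminus B_\delta(x))} |x-y|^{-N-2s}\,dy \;=\; \omega\int_\delta^{\delta_0} r^{-1-2s}\,dr,$$
where the equality follows by passing to spherical coordinates centered at $x$ and using that the rigid motion $T_x$ preserves the solid angle. Uniformity in $s \in [0,1)$ then rests on a single trivial observation: since $r \le \delta_0 \le 1$ throughout the range of integration, one has $r^{-2s} \ge 1$ for every $s \ge 0$, and therefore
$$\omega\int_\delta^{\delta_0} r^{-1-2s}\,dr \;\ge\; \omega \int_\delta^{\delta_0} r^{-1}\,dr \;=\; \omega\,\log\frac{\delta_0}{\delta}.$$
The lemma follows with $C_0 := \omega$ and $\delta_0$ as above.

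The main (essentially only) obstacle is the geometric input: one must invoke, with proper attribution to the strong Lipschitz hypothesis, that the same reference cone works uniformly in $x \in \overline{\Omega}$ after rotation, a classical result (see e.g.\ Grisvard, \emph{Elliptic Problems in Nonsmooth Domains}, Thm.~1.2.2.2). Once that geometric fact is granted, the only remaining arithmetic subtlety---obtaining a constant independent of $s\in[0,1)$---evaporates after one insists on $\delta_0 \le 1$, because this single constraint forces $r^{-2s} \ge 1$ on $[\delta,\delta_0]$ and makes the worst case $s = 0$ the one that produces the advertised logarithmic lower bound.
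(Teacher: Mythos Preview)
Your argument is correct and matches the paper's own proof essentially step for step: both invoke the uniform interior cone condition (citing Grisvard, Thm.~1.2.2.2), restrict the integral to the rotated cone, pass to spherical coordinates, and use $\delta_0\le 1$ to drop the exponent $-2s$. The only cosmetic difference is that the paper parametrizes the cone by an opening angle $\alpha$ and writes $C_0=\cH^{N-1}(S_\alpha)$ where you write $C_0=\omega$.
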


\begin{proof}
	Since the boundary $\partial\Omega$ is Lipschitz, then $\Omega$ has the uniform cone property (see for instance \cite[Theorem 1.2.2.2]{grisvard2011elliptic}).  Therefore, there exist a cone segment
	$$
	\cC_{\alpha,\delta_0}:=\{z\in\R^N: 0< |z| \le r,\: \frac{z}{|z|} \cdot e_N <\alpha\}
	$$
	for some $\delta_0 \in (0,1)$, $\alpha\in (0,\frac{\pi}{2}]$ with the property that for every $x \in \overline \Omega$ there exists a rotation $\cR_x\in O(N)$ with
	$$
	x+\cR_x(\cC_{\alpha,\delta_0})\subset\Omega.
	$$
	Setting $S_\alpha:= \{z \in S^{N-1}\::\: z \cdot e_N < \alpha\}$, we thus have  
	\begin{align*}
	&\int_{\Omega\setminus B_{\delta}(x)}|x-y|^{-N-2s} dy \geq \int_{(x+\cR_x(\cC_{\alpha,\delta_0})) \setminus B_{\delta}(x)}|x-y|^{-N-2s} dy = \int_{\cC_{\alpha,\delta_0} \setminus B_{\delta}(0)}|z|^{-N-2s} dz\\
	&\ge \int_{\cC_{\alpha,\delta_0})) \setminus B_{\delta}(0)}|z|^{-N}\ dz  \geq \cH^{N-1}(S_\alpha) \int_{\delta}^{\delta_0}\rho^{-1}d\rho = \cH^{N-1}(S_\alpha)\, \log \frac{\delta_0}{\delta},
	\end{align*}
	where $\cH^{N-1}(S_\alpha)$ is the surface measure of the set $S_\alpha \subset S^{N-1}$.
	Hence (\ref{cone-estimate}) holds with $C_0:= \cH^{N-1}(S_\alpha)$.         
\end{proof}

\begin{proof}[Proof of Theorem~\ref{new-poisson-problem}]
	In the following, we let $C_0$ and $\delta_0>0$ be given by Lemma~\ref{cone-property}. For $\delta \in (0,\delta_0)$ and $s \in [0,1)$, we consider the kernel function
	$$
	z \mapsto j_{\delta,s}(z)=\chi_{B_\delta(0)}(z) |z|^{-N-2s} 
	$$
	and the corresponding quadratic form defined by 
	\begin{equation}\label{def-quadratic form}
	\cE_s^{\delta}(v,\phi)=\frac{1}{2}\int_{\Omega}\int_{\Omega}(v(x)-v(y))(\phi(x)-\phi(y))j_{\delta,s}(x-y)\ dydx~~~\text{for}~~~v,\phi\in\mathbb{H}^s(\Omega).
	\end{equation}
	Since $u\in\mathbb{H}^s(\Omega)$ satisfies  (\ref{bounded-sub-poisson-problem-equation}), we have 
	\begin{align}\label{localization-rule}
	&\int_{\Omega} f \phi\,dx  \ge \cE_s(u,\phi) + \int_{\Omega} V(x)u(x) \phi(x)\,dx\\
	&=  \cE_{s}^\delta(u,\phi) + \int_{\Omega} V(x)u(x) \phi\,dx
	+ \frac{1}{2}\iint_{\stackrel{x,y \in \Omega}{|x-y|\ge \delta}}\frac{(u(x)-u(y))(\phi(x)-\phi(y))}{|x-y|^{N+2s}}\,dxdy \nonumber\\
	&=  \cE_{s}^\delta(u,\phi) + \int_\Omega (\gamma_{s,\delta}(x)+V(x))u(x)\phi(x)\,dx - \int_{\Omega}\kappa_{s,\delta,u}(x)\phi(x)\,dx\\
	&\ge   \cE_{s}^\delta(u,\phi) + \int_\Omega (\gamma_{s,\delta}(x)+V(x))u(x)\phi(x)\,dx - \int_{\Omega}\kappa_{s,\delta,u^+}(x)\phi(x)\,dx
	\nonumber  \end{align}
	for $\phi \in \mathbb{H}^s(\Omega)$, $\phi \ge 0$ with
	$$
	\gamma_{s,\delta}(x)=\int_{\Omega\setminus B_{\delta}(x)}|x-y|^{-N-2s}\ dy, \qquad \kappa_{s,\delta,u}(x):= \int_{\Omega \setminus B_{\delta}(x)}u(y)|x-y|^{-N-2s}\,dy   
	$$
	and
	$$
	\kappa_{s,\delta,u^+}(x):= \int_{\Omega \setminus B_{\delta}(x)}u^+(y)|x-y|^{-N-2s}\,dy.
	$$
	We note that 
	\begin{equation}
	\label{eq:inf-gamma-est}
	\inf_{x \in \Omega} \gamma_{s,\delta}(x) \ge C_0 \log \frac{\delta_0}{\delta} \qquad \text{for $\delta \in (0,\delta_0)$, $s \in [0,1)$}
	\end{equation}
	by Lemma~\ref{cone-property}. Next we fix $c>0$ and apply (\ref{localization-rule}) to $\phi_c:= (u-c)^+$, which is easily seen to be a function in $\mathbb{H}^s(\Omega)$. Since $u \phi_c \ge c \phi_c$ in $\Omega$, (\ref{localization-rule}) and (\ref{eq:inf-gamma-est}) give
	\begin{equation}\label{localization-rule-1}
	\int_{\Omega} f \phi_c\,dx \ge \cE_s^\delta(u,\phi_c)  + \Bigl(\bigl(C_0 \log \frac{\delta_0}{\delta}-\|V\|_{L^\infty(\Omega)}\bigr) c -\|\kappa_{s,\delta,u^+}\|_{L^\infty(\Omega)} \Bigr)\int_\Omega \phi_c\,dx
	\end{equation}
	where 
	$$
	\cE_s^\delta(u,\phi_c)=\cE_s^{\delta}(u-c,(u-c)^+)=\cE_s^{\delta}(\phi_c,\phi_c)-\cE_s^{\delta}((u-c)^{-},(u-c)^{+}) \geq\cE_s^{\delta}(\phi_c,\phi_c)\geq0.
	$$
	Consequently, (\ref{localization-rule-1}) implies that 
	\begin{equation}\label{localization-rule-2}
	\Bigl(\|f\|_{L^\infty(\Omega)} + \|\kappa_{s,\delta,u^+}\|_{L^\infty(\Omega)}- \bigl(C_0 \log \frac{\delta_0}{\delta}-\|V\|_{L^\infty(\Omega)}\bigr) c \Bigr) \int_\Omega \phi_c\,dx \ge 0. 
	\end{equation}
	Next, we fix $\delta \in (0,\delta_0)$ with the property that $C_0 \log \frac{\delta_0}{\delta}-\|V\|_{L^\infty(\Omega)} \ge 1$, so that (\ref{localization-rule-2}) reduces to 
	\begin{equation}\label{localization-rule-3}
	\Bigl(\|f\|_{L^\infty(\Omega)} + \|\kappa_{s,\delta,u^+}\|_{L^\infty(\Omega)}- c \Bigr) \int_\Omega \phi_c\,dx \ge 0. 
	\end{equation}
	If $c> \|f\|_{L^\infty(\Omega)} + \|\kappa_{s,\delta,u^+}\|_{L^\infty(\Omega)}$, (\ref{localization-rule-3}) implies that $\int_\Omega \phi_c\,dx= 0$ and therefore $u \le c$ in $\Omega$. We thus conclude that $u \le c_0$ with
	$$
	c_0:= \|f\|_{L^\infty(\Omega)} + \|\kappa_{s,\delta,u^+}\|_{L^\infty(\Omega)}.
	$$
	Since 
	\begin{align*}
	0 \le \kappa_{s,\delta,u^+}(x)&= \int_{\Omega \setminus B_{\delta}(x)}u^+(y)|x-y|^{-N-2s}\,dy\\
	&\le \delta^{-N-2s} \int_{\Omega}u^+(y)\,dy
	\le \delta^{-N-2}\sqrt{|\Omega|} \|u^+\|_{L^2(\Omega)}
	\end{align*}
	for $x \in \Omega$, it follows that $c_0$ only depends on $N,\Omega,\|V\|_{L^\infty(\Omega)},\|f\|_{L^\infty(\Omega)}$ and $\|u^+\|_{L^2(\Omega)}$, as claimed.
\end{proof}

\section{Uniform estimates for convergence of eigenvalues and eigenfunctions of $\cD^s_\Omega$}
\label{boundedness-of-eigenfunctions}

In this section we first prove global bounds on eigenvalues and eigenfunctions of the operator family $\cD^s_{\Omega}$. Then we shall prove convergence of eigenvalues and eigenfunctions in the limit $s \to 0^+$.

The first result of this section is the following.

\begin{prop}
	\label{uniform-bound-eigenvalues}
	Let $\Omega \subset \R^N$ be a bounded open Lipschitz set. For every $n \in \N$, $s_0 \in (0,1)$ we have
	$$
	\Lambda_{n,s_0}^\Omega:= \sup_{s \in [0,s_0]} \lambda^{\Omega}_{n,s} < \infty.
	$$
\end{prop}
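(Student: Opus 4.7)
The plan is to apply the min-max characterization~(\ref{eq:var-char-n}) from Remark~\ref{higher-eigenvalues} to a single $n$-dimensional subspace $W \subset \mathbb{X}^s(\Omega)$ which is admissible for \emph{all} $s \in [0,s_0]$ simultaneously, and then to control the Dirichlet form $\cE_s$ on $W$ uniformly in $s$. Since the min-max gives an upper bound in terms of any such test subspace, this reduces the problem to producing a good explicit $W$ whose energies do not blow up as $s\to 0^+$.

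First, I would choose $\phi_1,\ldots,\phi_n \in C_c^\infty(\Omega)$ with pairwise disjoint supports and set $\psi_i := \phi_i - \frac{1}{|\Omega|}\int_\Omega \phi_i\,dx$. The disjointness of the supports makes it immediate that the $\psi_i$ are linearly independent, and they are zero-mean by construction. Crucially, the additive constant cancels in the difference $\psi_i(x)-\psi_i(y)=\phi_i(x)-\phi_i(y)$, so $\cE_s(\psi_i,\psi_i)=\cE_s(\phi_i,\phi_i)$; hence the energy is computed as if the functions were still $C_c^\infty$, and the Lipschitz bound in the last step below will show $\psi_i \in \mathbb{H}^s(\Omega)$ for every $s \in [0,1)$. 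Setting $W := \mathrm{span}(\psi_1,\ldots,\psi_n)$ thus yields an $n$-dimensional subspace of $\mathbb{X}^s(\Omega)$ that is independent of $s$.

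Next, by~(\ref{eq:var-char-n}), one has $\lambda^{\Omega}_{n,s} \le \sup_{u \in S_W}\cE_s(u,u)$. For $u = \sum a_i \psi_i \in S_W$, the equivalence of all norms on the fixed finite-dimensional space $W$ produces a constant $K$, independent of $s$, with $\max_i |a_i| \le K$. Expanding $\cE_s(u,u)$ bilinearly and applying the Cauchy-Schwarz inequality $|\cE_s(\psi_i,\psi_j)|\le \cE_s(\psi_i,\psi_i)^{1/2}\cE_s(\psi_j,\psi_j)^{1/2}$ on the off-diagonal terms, everything reduces to an $s$-uniform upper bound on $\cE_s(\phi_i,\phi_i)$ for each $i$.

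The final ingredient is the observation that each $\phi_i$ is globally Lipschitz with some constant $L_i$, so $(\phi_i(x)-\phi_i(y))^2 \le L_i^2 |x-y|^2$ and
\[
\cE_s(\phi_i,\phi_i) \le \frac{L_i^2}{2}\int_\Omega\int_\Omega |x-y|^{2-N-2s}\,dx\,dy \le C(N,\Omega)\,L_i^2\,\frac{d_\Omega^{2-2s}}{2-2s},
\]
which is continuous, hence bounded, on the compact interval $s\in [0,s_0]$ because $s_0<1$ keeps the denominator away from zero. Combining the previous steps yields the claim. The only real obstacle in this scheme is to choose a test subspace whose admissibility and whose energy bound both survive the limit $s\to 0^+$; starting from $C_c^\infty(\Omega)$ functions and subtracting the mean sidesteps any boundary issue and reduces the entire $s$-dependence to the harmless factor $d_\Omega^{2-2s}/(2-2s)$.
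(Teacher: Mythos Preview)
Your proof is correct and follows essentially the same route as the paper's: fix an $n$-dimensional subspace of zero-mean $C^1$ (resp.\ $C_c^\infty$) functions, use the min--max characterization~(\ref{eq:var-char-n}), and bound $\cE_s$ via the Lipschitz estimate $(u(x)-u(y))^2\le C|x-y|^2$ together with $\int_\Omega\int_\Omega |x-y|^{2-N-2s}\,dx\,dy \le C(N,\Omega)\,d_\Omega^{2-2s}/(2-2s)$, which is uniformly bounded on $[0,s_0]$. The only cosmetic differences are that you construct the subspace explicitly from disjointly supported bump functions and pass through a basis expansion with Cauchy--Schwarz, whereas the paper invokes directly the equivalence of the $C^1$- and $L^2$-norms on the fixed finite-dimensional subspace.
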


\begin{proof}
	Fix $n \in \N$, $s_0 \in (0,1)$. To estimate $\lambda^{\Omega}_{n,s}$ for $s \in [0,s_0]$, we use the variational characterization~(\ref{eq:var-char-n}) and let $V$ be a fixed $n$-dimensional subspace of $C^1_{*}(\overline \Omega) = \bigl\{u \in C^1(\overline \Omega)\::\: \int_{\Omega}u\,dx = 0 \bigr\}$. For all $u\in V$, we then have
	\begin{align}
	\cE_s(u,u)&=\frac{1}{2}\int_{\Omega}\int_{\Omega}\frac{(u(x)-u(y))^2}{|x-y|^{N+2s}}\ dxdy\leq \frac{\|\nabla u\|^2_{L^{\infty}(\Omega)}}{2}\int_{\Omega}\int_{\Omega}|x-y|^{2-N-2s}\ dxdy \nonumber \\&\leq \frac{\|u\|^2_{C^1(\overline \Omega)}}{2} \int_{\Omega}\int_{B_{d_{\Omega}}(x)}|x-y|^{2-N-2s}\ dydx\leq
	\frac{\|u\|^2_{C^1(\overline \Omega)}}{4(1-s)}|\Omega|\cH^{N-1}(S^{N-1})d^{2(1-s)}_{\Omega}.   \label{uniform-bound-eigenvalues-proof-1}
	\end{align}
	Moreover, since the norms $\|\cdot\|_{C^2}$ and $\|\cdot\|_{L^2}$ are equivalent on $V$, there exists
	$C_V=C(V)>0$ such that 
	\begin{equation}
	\label{uniform-bound-eigenvalues-proof-2}
	\|u\|_{C^1(\overline \Omega)}\leq C_V\|u\|_{L^2(\Omega)} \qquad \text{for every $u \in V$.}
	\end{equation}
	Combining (\ref{uniform-bound-eigenvalues-proof-1}) and (\ref{uniform-bound-eigenvalues-proof-2}), we deduce that
	$$
	\cE_s(u,u) \le \frac{C_V }{4(1-s_0)}|\Omega|\cH^{N-1}(S^{N-1})\max \{1, d^{2}_{\Omega}\} \quad \text{for $u \in V$ with $\|u\|_{L^2(\Omega)} = 1$.}
	$$
	It thus follows from (\ref{eq:var-char-n}) that $\sup \limits_{s \in [0,s_0]} \lambda^{\Omega}_{n,s} < \infty$, as claimed.
\end{proof}

Combining Theorem~\ref{new-poisson-problem} and Proposition~\ref{uniform-bound-eigenvalues}, we obtain the following uniform bound on eigenfunctions.

\begin{thm}\label{global-bound-of-eigenfunctions-of-l-delta}
	Let $\Omega \subset \R^N$ be a bounded open Lipschitz set, let $n \in \N$, and let $s_0 \in (0,1)$. Then there exists a constant $C=C(N,\Omega,n,s_0)>0$ with the property that for every $s \in [0,s_0]$ and every eigenfunction $\xi \in \mathbb{X}^s(\Omega)$ of the eigenvalue problem (\ref{eigenvalue-problem-relate-to-m}) corresponding to the eigenvalue $\lambda^{\Omega}_{n,s}$ we have
	$$  
	\xi \in L^\infty(\Omega) \qquad \text{and}\qquad \|\xi\|_{L^{\infty}(\Omega)}\leq C \|\xi\|_{L^2(\Omega)}. 
	$$
\end{thm}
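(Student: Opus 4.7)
The plan is to view the eigenvalue equation $\cD^s_\Omega \xi = \lambda^{\Omega}_{n,s} \xi$ as a special case of the Poisson problem with $L^\infty$-potential studied in Theorem~\ref{new-poisson-problem-cor-intro}. Indeed, writing the equation as
$$
\cD^s_\Omega \xi + V(x) \xi = f \quad \text{in } \Omega \qquad \text{with } V \equiv -\lambda^{\Omega}_{n,s},\ f \equiv 0,
$$
the function $\xi \in \mathbb{X}^s(\Omega)$ is a weak solution in the sense of Definition~\ref{defi-weak-sol}, and Proposition~\ref{uniform-bound-eigenvalues} furnishes the uniform bound $\|V\|_{L^\infty(\Omega)} = \lambda^{\Omega}_{n,s} \le \Lambda^{\Omega}_{n,s_0}$ for all $s \in [0,s_0]$.

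To pass from the qualitative $L^\infty$-bound of Theorem~\ref{new-poisson-problem-cor-intro} to the \emph{linear} bound $\|\xi\|_{L^\infty(\Omega)} \le C \|\xi\|_{L^2(\Omega)}$, I would first normalize: set $\eta := \xi/\|\xi\|_{L^2(\Omega)}$, so that $\|\eta\|_{L^2(\Omega)} = 1$ and $\eta$ solves the same equation. Applying Theorem~\ref{new-poisson-problem-cor-intro} to $\eta$ then yields $\|\eta\|_{L^\infty(\Omega)} \le c_0$, where $c_0 = c_0(N,\Omega,\Lambda^{\Omega}_{n,s_0},0,1)$. Rescaling gives the desired estimate, with constant $C = c_0$ depending only on $N, \Omega, n, s_0$.

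The one point requiring verification is that the constant $c_0$ in Theorem~\ref{new-poisson-problem-cor-intro} is truly uniform in $s \in [0,s_0]$. Inspecting the proof of Theorem~\ref{new-poisson-problem}, the crucial parameter $\delta$ is chosen so that $C_0 \log(\delta_0/\delta) - \|V\|_{L^\infty(\Omega)} \ge 1$, where $C_0, \delta_0$ from Lemma~\ref{cone-property} depend only on $N$ and $\Omega$. Since $\|V\|_{L^\infty(\Omega)}$ admits the $s$-independent bound $\Lambda^{\Omega}_{n,s_0}$, the same $\delta$ works for all $s \in [0,s_0]$, and the final estimate
$$
c_0 = \|f\|_{L^\infty(\Omega)} + \|\kappa_{s,\delta,u^+}\|_{L^\infty(\Omega)} \le \|f\|_{L^\infty(\Omega)} + \delta^{-N-2s_0}\sqrt{|\Omega|}\,\|u^+\|_{L^2(\Omega)}
$$
is manifestly $s$-independent on $[0,s_0]$. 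There is no real obstacle here; the theorem is a direct combination of the uniform eigenvalue bound of Proposition~\ref{uniform-bound-eigenvalues} with the uniform boundedness theorem already established in Section~\ref{interior regularity}.
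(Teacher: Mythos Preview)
Your argument is correct and follows exactly the paper's approach: normalize to $\|\xi\|_{L^2(\Omega)}=1$, then apply Theorem~\ref{new-poisson-problem-cor-intro} with $V\equiv -\lambda^{\Omega}_{n,s}$ and $f\equiv 0$, using Proposition~\ref{uniform-bound-eigenvalues} to bound $\|V\|_{L^\infty}$ uniformly in $s\in[0,s_0]$. Your additional paragraph tracking the $s$-dependence of the constant $c_0$ through the proof of Theorem~\ref{new-poisson-problem} is more explicit than the paper (which simply cites the $s$-independence asserted in Theorem~\ref{new-poisson-problem-cor-intro}), but the content is the same.
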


\begin{proof}
	By homogeneity, it suffices to consider eigenfunctions $\xi \in \mathbb{X}^s(\Omega)$ with $\|\xi\|_{L^2(\Omega)} = 1$. The result then follows by applying Theorem~\ref{new-poisson-problem-cor-intro} to $V \equiv -\lambda^{\Omega}_{n,s}$ and $f \equiv 0$, noting that $\|V\|_{L^\infty} = \lambda^{\Omega}_{n,s}$ is uniformly bounded independently of $s \in [0,s_0]$ by Proposition~\ref{uniform-bound-eigenvalues}.  
\end{proof}

In the remainder of this section, we study the transition from the fractional case $s>0$ to the logarithmic case $s=0$ with regard to the eigenvalues $\lambda^{\Omega}_{n,s}$ and corresponding eigenfunctions. For simplicity, we first consider the case $n=1$, that is the first positive eigenvalue $\lambda^{\Omega}_{1,s}$.

\begin{thm}\label{second-main-theorem}
	Let $\Omega\subset\R^N$ be a bounded open Lipschitz set. Then 
	\begin{equation}\label{limit-lambda-1-s}
	\lambda^{\Omega}_{1,s} \to \lambda^{\Omega}_{1,0} \qquad \text{as~~ $s \to 0^+$.}
	\end{equation}
	Moreover, if, for some sequence $s_k\rightarrow0^+$, $\{\xi_{1,s_{k}}\}_{k}$ is a sequence of $L^2$-normalized eigenfunctions of $\cD^{s_k}_{\Omega}$ corresponding to $\lambda^{\Omega}_{1,s_k},$ we have that, after passing to a subsequence,
	\begin{equation}\label{limit-xi-1-s}
	\xi_{1,s_{k}}\rightarrow\xi_1~~\text{in}~~L^2(\Omega)~ ~\text{as}~~k\rightarrow\infty,
	\end{equation}
where $\xi_1$ is an eigenfunction of $\cD^{0}_\Omega$ corresponding to $\lambda_{1,0}^\Omega.$
\end{thm}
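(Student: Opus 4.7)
The strategy is to prove the eigenvalue convergence \eqref{limit-lambda-1-s} by matching $\limsup$ and $\liminf$ bounds, and to extract the eigenfunction convergence \eqref{limit-xi-1-s} from the uniform bound of Theorem~\ref{global-bound-of-eigenfunctions-of-l-delta} combined with a single compact embedding applied in a fixed space. Throughout, fix $s_0 \in (0,\tfrac{1}{4})$ and restrict to $s \in [0,s_0]$.

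\textbf{Upper bound for $\lambda^{\Omega}_{1,s}$.} Given $\varepsilon>0$, by density of $C^{\infty}_c(\Omega)$ in $\mathbb{H}^0(\Omega)$ (Remark~\ref{density-result}(i)), subtracting the mean and normalizing in $L^2$ produces $\phi \in C^\infty(\overline{\Omega}) \cap \mathbb{X}^0(\Omega)$ with $\|\phi\|_{L^2(\Omega)}=1$ and $\cE_0(\phi,\phi) \le \lambda^{\Omega}_{1,0}+\varepsilon$. Since $\phi$ is Lipschitz on $\overline{\Omega}$, the integrand $(\phi(x)-\phi(y))^2|x-y|^{-N-2s}$ is pointwise dominated uniformly in $s\in[0,s_0]$ by $C\bigl(|x-y|^{2-N-2s_0}\chi_{|x-y|\le 1}+\chi_{|x-y|>1}\bigr)$, which is integrable on $\Omega\times\Omega$. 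Dominated convergence yields $\cE_s(\phi,\phi)\to\cE_0(\phi,\phi)$ as $s\to 0^+$, and \eqref{first-eigenvalue-of-the-regional-laplacian} gives $\limsup_{s\to 0^+}\lambda^{\Omega}_{1,s}\le \lambda^{\Omega}_{1,0}+\varepsilon$.

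\textbf{Uniform $\mathbb{H}^0$-bound and compactness.} Set $\lambda_k:=\lambda^{\Omega}_{1,s_k} = \cE_{s_k}(\xi_{1,s_k},\xi_{1,s_k})$; Proposition~\ref{uniform-bound-eigenvalues} gives $\lambda_k\le C$. Splitting $\Omega\times\Omega$ into $\{|x-y|\le 1\}$ and $\{|x-y|>1\}$ and using $|x-y|^{-N}\le|x-y|^{-N-2s_k}$ on the former, one obtains
\begin{equation*}
\|\xi_{1,s_k}\|_{\mathbb{H}^0(\Omega)}^2 \;\le\; 2\cE_{s_k}(\xi_{1,s_k},\xi_{1,s_k}) + (2|\Omega|+1)\|\xi_{1,s_k}\|_{L^2(\Omega)}^2 \;\le\; C',
\end{equation*}
uniformly in $k$. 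The compact embedding $\mathbb{H}^0(\Omega)\hookrightarrow L^2(\Omega)$ from Proposition~\ref{compact-embedding} then produces, along a subsequence (not relabelled), weak convergence in $\mathbb{H}^0(\Omega)$, strong convergence in $L^2(\Omega)$ and a.e.\ convergence to a limit $\xi_1 \in \mathbb{X}^0(\Omega)$ with $\|\xi_1\|_{L^2(\Omega)}=1$.

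\textbf{Lower bound via Fatou and conclusion.} Because $\xi_{1,s_k}(x)-\xi_{1,s_k}(y)\to \xi_1(x)-\xi_1(y)$ a.e.\ and $|x-y|^{-N-2s_k}\to|x-y|^{-N}$ a.e., Fatou's lemma applied to the nonnegative integrand $(\xi_{1,s_k}(x)-\xi_{1,s_k}(y))^2|x-y|^{-N-2s_k}$ gives
\begin{equation*}
\cE_0(\xi_1,\xi_1) \;\le\; \liminf_{k\to\infty}\cE_{s_k}(\xi_{1,s_k},\xi_{1,s_k}) \;=\; \liminf_{k\to\infty}\lambda^{\Omega}_{1,s_k}.
\end{equation*}
Combined with the variational characterization \eqref{first-eigenvalue-of-the-regional-laplacian} of $\lambda^{\Omega}_{1,0}$ and the upper bound above,
\begin{equation*}
\lambda^{\Omega}_{1,0} \;\le\; \cE_0(\xi_1,\xi_1) \;\le\; \liminf_{k\to\infty}\lambda^{\Omega}_{1,s_k} \;\le\; \limsup_{s\to 0^+}\lambda^{\Omega}_{1,s} \;\le\; \lambda^{\Omega}_{1,0}.
\end{equation*}
Hence $\lambda^{\Omega}_{1,s_k}\to \lambda^{\Omega}_{1,0}$, and since the limit is independent of the subsequence, the full convergence \eqref{limit-lambda-1-s} follows. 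Moreover $\cE_0(\xi_1,\xi_1)=\lambda^{\Omega}_{1,0}$ with $\xi_1\in\mathbb{X}^0(\Omega)$ of unit $L^2$-norm, so $\xi_1$ attains the infimum in \eqref{first-eigenvalue-of-the-regional-laplacian} and is therefore an eigenfunction of $\cD^0_\Omega$ associated with $\lambda^{\Omega}_{1,0}$, yielding \eqref{limit-xi-1-s}.

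\textbf{Expected main obstacle.} The nontrivial point is producing a compactness framework that is uniform in $s$. The variational identity only controls $\xi_{1,s_k}$ in the $s_k$-dependent space $\mathbb{H}^{s_k}(\Omega)$, and these spaces are not quantitatively nested as $s\to 0^+$. The crucial mechanism is the elementary inequality $|x-y|^{-N}\le|x-y|^{-N-2s_k}$ on $\{|x-y|\le 1\}$, which is what allows the uniform $\cE_{s_k}$-bound to pass to a uniform bound in the single space $\mathbb{H}^0(\Omega)$, to which the compact embedding of Proposition~\ref{compact-embedding} applies; from there, Fatou and the variational characterization close the loop.
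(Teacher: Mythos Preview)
Your proof is correct and proceeds along the same skeleton as the paper's---density for the $\limsup$ bound, the splitting $\{|x-y|\le 1\}$ versus $\{|x-y|>1\}$ to transfer the $s_k$-energy bound to a uniform $\mathbb{H}^0(\Omega)$-bound, and the compact embedding of Proposition~\ref{compact-embedding} for the $L^2$-convergence---but diverges from the paper at two points.

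For the $\limsup$ bound, the paper invokes Theorem~\ref{first-main-result} and \eqref{integration-of-M-delta} to write $\cE_s(u,u)=\langle \cD^s_\Omega u,u\rangle_2 \to \langle \cD^0_\Omega u,u\rangle_2=\cE_0(u,u)$ for $u\in C^2_*(\overline\Omega)$; you instead apply dominated convergence directly to the integrand, which is a bit more elementary and avoids the operator expansion entirely. For the $\liminf$ bound and the identification of $\xi_1$, the paper passes to the weak equation: for $\phi\in C^2_*(\overline\Omega)$ it shows $\cE_{s_k}(\xi_{1,s_k},\phi)=\langle\xi_{1,s_k},\cD^{s_k}_\Omega\phi\rangle_2 \to \langle\xi_1,\cD^0_\Omega\phi\rangle_2=\cE_0(\xi_1,\phi)$ via Theorem~\ref{first-main-result}, concluding that $\xi_1$ is an eigenfunction with eigenvalue $\lambda_*=\liminf_s \lambda^\Omega_{1,s}$, which then forces $\lambda_*=\lambda^\Omega_{1,0}$. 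You instead apply Fatou's lemma directly to the nonnegative energy density to obtain $\cE_0(\xi_1,\xi_1)\le\liminf_k\lambda^\Omega_{1,s_k}$, and then recognise $\xi_1$ as a minimiser of the Rayleigh quotient \eqref{first-eigenvalue-of-the-regional-laplacian}. Your route is shorter and does not rely on Theorem~\ref{first-main-result} at all, which is a genuine simplification for $n=1$. The paper's route, on the other hand, identifies the limiting weak equation rather than just the limiting energy; this is precisely what is needed for the higher eigenvalues in Theorem~\ref{main-result-for-lambda-n}, where the minimiser argument is unavailable and one must work with an $n$-dimensional system of limit eigenfunctions. (A minor remark: the reference to Theorem~\ref{global-bound-of-eigenfunctions-of-l-delta} in your opening sentence is not actually used in your argument; the uniform $\mathbb{H}^0$-bound comes solely from Proposition~\ref{uniform-bound-eigenvalues} and the splitting.)
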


\begin{proof}
	It is convenient to introduce the subspace $C^2_{*}(\overline{\Omega}):=\{u\in C^2(\overline{\Omega}):\int_{\Omega}u\ dx=0\}.$ Let $u\in C^2_{*}(\overline{\Omega})$ such that $\|u\|_{L^2(\Omega)}=1.$ Then Theorem~\ref{first-main-result} together with \eqref{integration-of-M-delta} yields
	\begin{equation*}
	\limsup_{s\rightarrow0^+}\lambda^{\Omega}_{1,s} \leq\limsup_{s\rightarrow0^+}\cE_s(u,u)=\lim\limits_{s\rightarrow0^+}\langle \cD^s_{\Omega}u ,u\rangle_2=\langle \cD^0_\Omega u,u\rangle_2=\cE_0(u,u).
	\end{equation*}
	Using the fact that, by Remark~\ref{density-result}, $C^2_{*}(\overline{\Omega})$ is dense in $\mathbb{X}^0(\Omega),$ we get
	\begin{equation}\label{uper-bound-of-the-derivative}
	\limsup_{s\rightarrow0^+}\lambda^{\Omega}_{1,s}\leq \inf_{\substack{u\in\mathbb{X}^0(\Omega)\\ \|u\|_{L^2(\Omega)}=1}}\cE_0(u,u)=\lambda_{1,0}^\Omega.
	\end{equation} 
	Next we consider
	$$
	\lambda_*:= \liminf \limits_{s\rightarrow0^+}\lambda^{\Omega}_{1,s} \qquad \in [0,\lambda^{\Omega}_{1,0}],
	$$
	and we let $\{s_k\}_{k\in\N}\subset (0,1)$ be a sequence with $s_k\rightarrow0^+$ as $k\rightarrow\infty$ and such that
	$\lim \limits_{k\rightarrow\infty} \lambda^{\Omega}_{1,s_k}=\lambda_*$. Moreover, we let $\xi_{1,s_k}$ be an eigenfunction associated to $\lambda^{\Omega}_{1,s_k}$ with $\|\xi_{1,s_k}\|_{L^2(\Omega)}=1$.
	We claim that
	\begin{equation}
	\label{eq:claim-upper-bound-xi-s-k}
	\limsup_{k \to \infty}  \cE_0(\xi_{1,s_k},\xi_{1,s_k}) \le   \lambda^{\Omega}_{1,0}.
	\end{equation}
	Indeed, from \eqref{uper-bound-of-the-derivative} we have,
	with
	$$
	A_\Omega:= \{(x,y) \in \Omega \times \Omega\::\: |x-y| \le 1\} \quad\text{and}\quad B_\Omega:=
	\{(x,y) \in \Omega \times \Omega\::\: |x-y| > 1\},
	$$
	the estimate
	\begin{align*}
	\lambda^{\Omega}_{1,0}+o(1)&\geq \lambda^{\Omega}_{1,s_k}=\cE_{s_k}(\xi_{1,s_k},\xi_{1,s_k})=\frac{1}{2}\int_{\Omega}\int_{\Omega}\frac{(\xi_{1,s_k}(x)-\xi_{1,s_k}(y))^2}{|x-y|^{N+2s_k}}\ dxdy\\
	&=\frac{1}{2}\Big(\iint_{A_\Omega}\frac{(\xi_{1,s_k}(x)-\xi_{1,s_k}(y))^2}{|x-y|^{N+2s_k}}\ dxdy+\iint_{B_\Omega}\frac{(\xi_{1,s_k}(x)-\xi_{1,s_k}(y))^2}{|x-y|^{N+2s_k}}\ dxdy\Big)
	\\&\geq\frac{1}{2}\Big(\iint_{A_\Omega}\frac{(\xi_{1,s_k}(x)-\xi_{1,s_k}(y))^2}{|x-y|^{N}}\ dxdy+d^{-2s_k}_{\Omega}\iint_{B_\Omega}\frac{(\xi_{1,s_k}(x)-\xi_{1,s_k}(y))^2}{|x-y|^{N}}\ dxdy\Big)\\
	&=\cE_0(\xi_{1,s_k},\xi_{1,s_k})+\frac{d^{-2s_k}_{\Omega}-1}{2}\iint_{B_\Omega}\frac{(\xi_{1,s_k}(x)-\xi_{1,s_k}(y))^2}{|x-y|^{N}}\ dxdy\\
	&\ge
	\cE_0(\xi_{1,s_k},\xi_{1,s_k})+d^{-N}_{\Omega}
	\frac{d^{-2s_k}_{\Omega}-1}{2}\iint_{B_\Omega}(\xi_{1,s_k}(x)-\xi_{1,s_k}(y))^2\ dxdy.
	\end{align*}
	If $d_\Omega \le 1$, we infer that $\cE_0(\xi_{1,s_k},\xi_{1,s_k}) \le \lambda^{\Omega}_{1,0}+o(1)$ and therefore (\ref{eq:claim-upper-bound-xi-s-k}) already follows. If $d_\Omega >1$, we estimate
	$$
	\iint_{B_\Omega}(\xi_{1,s_k}(x)-\xi_{1,s_k}(y))^2\ dxdy \le 2
	\iint_{\Omega \times \Omega}(\xi_{1,s_k}^2(x)+\xi_{1,s_k}^2(y)) dxdy \le 4 |\Omega|\|\xi_{1,s_k}\|_{L^2(\Omega)}^2 = 4 |\Omega|
	$$
	which yields
	$$
	\lambda^{\Omega}_{1,0}+o(1)\geq \cE_0(\xi_{1,s_k},\xi_{1,s_k})+2|\Omega|d^{-N}_{\Omega}(d^{-2s_k}_{\Omega}-1) = \cE_0(\xi_{1,s_k},\xi_{1,s_k})+ o(1).
	$$
	Hence (\ref{eq:claim-upper-bound-xi-s-k}) also follows in this case.
	
	As a consequence of (\ref{eq:claim-upper-bound-xi-s-k}), the sequence $\xi_{1,s_k}$ is uniformly bounded in $\mathbb{H}^0(\Omega).$ So, after passing to a subsequence, there exists $\xi_1 \in\mathbb{H}^0(\Omega)$ such that $\xi_{1,s_k}\rightharpoonup \xi_1$ in $
	\mathbb{H}^0(\Omega)$, which by Proposition~\ref{compact-embedding} implies that $\xi_{1,s_k}\rightarrow \xi_1$ in $L^2(\Omega)$.
	Consequently, $\|\xi_1\|_{L^2(\Omega)}=1$ and  $\int_{\Omega}\xi_1\ dx=0$, so in particular $\xi_1\in\mathbb{X}^0(\Omega)$.
	
	Next, from Theorem~\ref{first-main-result} and Remark~\ref{density-result}, we have that for all $\phi\in C^2_{*}(\overline{\Omega})$,
	\begin{align}\label{contradiction-argument}
	\lim\limits_{k\rightarrow\infty} \lambda^{\Omega}_{1,s_k} \langle\xi_{1,s_k},\phi\rangle_{2}=\lim\limits_{k\rightarrow\infty} \cE_{s_k}(\xi_{1,s_k},\phi)=\lim\limits_{k\rightarrow\infty}\langle \xi_{1,s_k},\cD^{s_k}_{\Omega}\phi \rangle_{2}=\langle\xi_1,\cD^{0}_{\Omega}  \phi\rangle_{2}=\cE_0(\xi_1,\phi)
	\end{align}
	Since also $\langle \xi_{1,s_k},\phi\rangle_2\rightarrow\langle \xi_1,\phi\rangle_2$ for all $\phi\in C^2_{*}(\overline{\Omega})$ as $k\rightarrow\infty$, it follows from \eqref{contradiction-argument} that 
	$$\cE_0(\xi_1,\phi)=\lambda_*\langle\xi_1,\phi\rangle_{2}~~~\text{for all}~~\phi\in C^2_{*}(\overline{\Omega}).$$
	By density, we get
	$$\cE_0(\xi_1,\phi)=\lambda_*\langle\xi_1,\phi\rangle_{2}~~~\text{for all}~~\phi\in\mathbb{X}^0(\Omega).$$
	Since $\xi_1 \in \mathbb{X}^0(\Omega) \setminus \{0\}$, we then deduce that $\lambda_* \in (0,\lambda^\Omega_{1,0}]$ is an eigenvalue of $\cD^0_{\Omega}$ with corresponding eigenfunction $\xi_1$. Since $\lambda^\Omega_{1,0}$ is the smallest positive eigenvalue of $\cD^0_{\Omega}$ by definition, we conclude that $\lambda_*= \lambda^\Omega_{1,0}$. Combining this equality with \eqref{uper-bound-of-the-derivative}, we conclude that $\lambda^{\Omega}_{1,s} \to \lambda^{\Omega}_{1,0}$ as $s \to 0^+$, as claimed in (\ref{limit-lambda-1-s}). Moreover, we have already proved above that if, for some sequence $s_k\rightarrow0^+$, $\{\xi_{1,s_{k}}\}_{k}$ is a sequence of $L^2$-normalized eigenfunctions of $\cD^{s_k}_{\Omega}$ corresponding to $\lambda^{\Omega}_{1,s_k},$ we have that
$\xi_{1,s_{k}}\rightarrow \xi_1$ in $L^2(\Omega)$ after passing to a subsequence, where $\xi_1$ is an eigenfunction of $\cD^{0}_\Omega$ corresponding to $\lambda_{1,0}^\Omega.$ The proof is thus finished.
\end{proof}

Next, we now consider the case of higher eigenvalues. We have the following.

\begin{thm}\label{main-result-for-lambda-n}
	Let $\Omega\subset\R^N$ be a bounded open Lipschitz set. Then 
\begin{equation}\label{limit-lambda-n-s-}
\lambda^{\Omega}_{n,s} \to \lambda^{\Omega}_{n,0} \qquad \text{as~~ $s \to 0^+$.}
\end{equation}
Moreover, if, for some sequence $s_k\rightarrow0^+$, $\{\xi_{n,s_{k}}\}_{k}$ is a sequence of $L^2$-normalized eigenfunctions of $\cD^{s_k}_{\Omega}$ corresponding to $\lambda^{\Omega}_{n,s_k},$ we have that, after passing to a subsequence,
\begin{equation}\label{limit-xi-n-s}
\xi_{n,s_{k}}\rightarrow\xi_n~~\text{in}~~L^2(\Omega)~ ~\text{as}~~k\rightarrow\infty,
\end{equation}
where $\xi_n$ is an eigenfunction of $\cD^{0}_\Omega$ corresponding to $\lambda_{n,0}^\Omega.$ 
\end{thm}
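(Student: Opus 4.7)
My plan is to combine the variational min-max characterization (\ref{eq:var-char-n}) of higher eigenvalues with the strategy developed for Theorem~\ref{second-main-theorem}. The proof splits naturally into an upper bound $\limsup_{s\to 0^+}\lambda^{\Omega}_{n,s}\le \lambda^{\Omega}_{n,0}$ obtained by plugging a well-chosen smooth $n$-dimensional test subspace into~(\ref{eq:var-char-n}), and a matching lower bound together with the eigenfunction convergence obtained via a diagonal extraction applied simultaneously to all $n$ eigenfunction sequences $\xi_{1,s_k},\ldots,\xi_{n,s_k}$. I write $C^2_*(\overline{\Omega}):=\{u\in C^2(\overline{\Omega}):\int_\Omega u\,dx=0\}$, which by Remark~\ref{density-result} is dense in $\mathbb{X}^0(\Omega)$.

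For the upper bound I would first pick an $L^2$-orthonormal family $\eta_1,\ldots,\eta_n\in \mathbb{X}^0(\Omega)$ of eigenfunctions of $\cD^0_\Omega$ associated to $\lambda^{\Omega}_{1,0},\ldots,\lambda^{\Omega}_{n,0}$. For $\varepsilon>0$ I approximate each $\eta_j$ in $\mathbb{H}^0$-norm by a function in $C^2_*(\overline{\Omega})$ and apply Gram--Schmidt in $L^2$ to obtain an $L^2$-orthonormal family $\widehat\eta_1,\ldots,\widehat\eta_n\in C^2_*(\overline{\Omega})$ whose span $V_\varepsilon$ lies within $O(\varepsilon)$ of $\mathrm{span}(\eta_1,\ldots,\eta_n)$ in $\mathbb{H}^0$-norm. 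Since $V_\varepsilon \subset C^2_*(\overline{\Omega})\subset \mathbb{X}^s(\Omega)$ for every $s\in[0,1)$ and its $L^2$-unit sphere is a bounded subset of $C^\alpha(\overline{\Omega})$ for any $\alpha\in(0,1)$, Theorem~\ref{first-main-result} together with (\ref{integration-of-M-delta}) yields $\cE_s(u,u)=\langle\cD^s_\Omega u,u\rangle_2 \to \cE_0(u,u)$ uniformly in $u$ over this sphere as $s\to 0^+$. Plugging $V_\varepsilon$ into (\ref{eq:var-char-n}) and using weak continuity of $\cE_0$ then gives $\limsup_{s\to 0^+}\lambda^{\Omega}_{n,s}\le \lambda^{\Omega}_{n,0}+O(\varepsilon)$, which yields the upper bound by letting $\varepsilon\to 0$.

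For the lower bound and the eigenfunction convergence I would fix an arbitrary sequence $s_k\to 0^+$ and, for each $k$, a full $L^2$-orthonormal family $\xi_{1,s_k},\ldots,\xi_{n,s_k}$ of eigenfunctions of $\cD^{s_k}_\Omega$ with eigenvalues $\lambda^{\Omega}_{1,s_k},\ldots,\lambda^{\Omega}_{n,s_k}$. The uniform eigenvalue bounds of Proposition~\ref{uniform-bound-eigenvalues} together with the $A_\Omega/B_\Omega$ splitting used in the proof of Theorem~\ref{second-main-theorem} show that each sequence $(\xi_{j,s_k})_k$ is bounded in $\mathbb{H}^0(\Omega)$. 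A diagonal extraction combined with Proposition~\ref{compact-embedding} produces, along a subsequence, weak limits $\xi_j\in \mathbb{H}^0(\Omega)$ with $\xi_{j,s_k}\to \xi_j$ in $L^2(\Omega)$, $\lambda^{\Omega}_{j,s_k}\to \mu_j\in [0,\Lambda^{\Omega}_{n,s_0}]$ with $\mu_1\le \cdots\le \mu_n$, and preserved $L^2$-orthonormality, so in particular $\xi_j\in \mathbb{X}^0(\Omega)$ with $\|\xi_j\|_{L^2(\Omega)}=1$. The test-function argument of Theorem~\ref{second-main-theorem}---based on Theorem~\ref{first-main-result}, (\ref{integration-of-M-delta}) and density of $C^2_*(\overline{\Omega})$ in $\mathbb{X}^0(\Omega)$---then shows that each $\xi_j$ is an eigenfunction of $\cD^0_\Omega$ with eigenvalue $\mu_j$.

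To close the argument I set $W:=\mathrm{span}(\xi_1,\ldots,\xi_n)\in V_n^0$. The orthogonality relations $\cE_0(\xi_i,\xi_j)=\mu_j\delta_{ij}$ give $\cE_0(u,u)=\sum_j a_j^2\mu_j \le \mu_n$ for every $u=\sum_j a_j\xi_j\in W$ with $\|u\|_{L^2(\Omega)}=1$, so (\ref{eq:var-char-n}) at $s=0$ yields $\lambda^{\Omega}_{n,0}\le \mu_n$. Combined with the upper bound from the first step, this forces $\mu_n=\lambda^{\Omega}_{n,0}$; a standard subsequence argument then promotes this to the full limit (\ref{limit-lambda-n-s-}), and by construction $\xi_n$ is an eigenfunction of $\cD^0_\Omega$ corresponding to $\lambda^{\Omega}_{n,0}$, proving (\ref{limit-xi-n-s}). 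The main technical obstacle I anticipate is the simultaneous diagonal extraction for all $n$ eigenfunction sequences while preserving $L^2$-orthonormality in the limit and ensuring that each $\xi_j$ is a genuine (nonzero) eigenfunction of $\cD^0_\Omega$ with the right limiting eigenvalue $\mu_j$, so that the min-max inequality $\lambda^{\Omega}_{n,0}\le \mu_n$ is truly achieved on the limit subspace $W$.
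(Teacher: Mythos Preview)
Your proposal is correct and follows essentially the same route as the paper: an upper bound via the min-max characterization~(\ref{eq:var-char-n}) on smooth test subspaces together with Theorem~\ref{first-main-result}, and a lower bound by extracting $L^2$-orthonormal limits $\xi_1,\dots,\xi_n$ of the full eigenfunction system, identifying them as eigenfunctions of $\cD^0_\Omega$, and plugging their span into~(\ref{eq:var-char-n}) at $s=0$. The only cosmetic difference is that the paper takes an arbitrary $n$-dimensional subspace of $C^2_*(\overline{\Omega})$ and passes to the infimum via density, whereas you build a specific subspace by approximating the $\cD^0_\Omega$-eigenfunctions and applying Gram--Schmidt; both yield the same $\limsup$ inequality.
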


The proof of this theorem is similar to the one of Theorem~\ref{second-main-theorem} but somewhat more involved technically.

\begin{proof}[Proof of Theorem \ref{main-result-for-lambda-n}]
Similarly as in the proof of Theorem \ref{second-main-theorem}, we 
first show that
\begin{equation}\label{lim-sup-lambda-n}
\limsup_{s\rightarrow0^+}\lambda^{\Omega}_{n,s}\leq \lambda^{\Omega}_{n,0}.
\end{equation}
For this we consider again the subspace $C^2_{*}(\overline{\Omega}) \subset \mathbb{X}^s(\Omega)$, and we fix an $n$-dimensional subspace $V \subset C^2_{*}(\overline{\Omega})$. Then $S_V:= \{u \in V\::\: \|u\|_{L^2(\Omega)} = 1\}$ is bounded in $C^2_{*}(\overline{\Omega})$ since the $L^2$-norm and the $C^2$-norm are equivalent on $V$. Thus Theorem~\ref{first-main-result} gives, together with \eqref{integration-of-M-delta} and \eqref{eq:var-char-n}, the estimate
	\begin{align*}
          \limsup_{s\rightarrow0^+}\lambda^{\Omega}_{n,s} \leq
          \limsup_{s\rightarrow0^+} \sup_{u \in S_V}\cE_s(u,u)=\lim \limits_{s\rightarrow0^+} \sup_{u \in S_V}\langle \cD^s_{\Omega}u ,u\rangle_2&= \sup_{u \in S_V} \langle \cD^0_\Omega u,u\rangle_2\\
          &= \sup_{u \in S_V} \cE_0(u,u).
	\end{align*}
	Using again the fact that, by Remark~\ref{density-result}, $C^2_{*}(\overline{\Omega})$ is dense in $\mathbb{X}^0(\Omega)$ and that
        $$
		\lambda^{\Omega}_{n,0} = \inf_{V \in V_n^0} \sup_{u \in S_V}\cE_0(u,u),
                $$
                by \eqref{eq:var-char-n}, where $V_n^0$ denotes the family of $n$-dimensional subspaces of $\mathbb{X}^0(\Omega)$, we deduce (\ref{lim-sup-lambda-n}).
                
Next we show the corresponding liminf inequality. For this, we fix $n \in \N$ and set 
\begin{equation*}
  \lambda^*_{j}:=\liminf_{s\to 0^+}\lambda^{\Omega}_{j,s}   \qquad \text{for $j=1,\dots,n$,}
\end{equation*}
noting that
\begin{equation}
  \label{lim-sup-lambda-n-1}
  \lambda^*_{j} \le \lambda^*_n \qquad \text{for $j=1,\dots,n$}
\end{equation}
since the sequence of numbers $\lambda^{\Omega}_{j,s}$ is increasing in $j$ for every $s \in (0,1)$. Moreover, we choose a sequence of numbers $s_k \in (0,1)$, $k \in \N$ with $s_k \to 0^+$ and $\lambda^{\Omega}_{n,s_k} \to \lambda^{*}_{n}$ as $k \to \infty$. We then choose, for every $k \in \N$, a system of $L^2$-orthonormal eigenfunctions $\xi_{1,s_k},\dots,\xi_{n,s_k}$   associated to the eigenvalues $\lambda^{\Omega}_{1,s_k},\dots,\lambda^{\Omega}_{n,s_k}$.

Proceeding precisely as in the proof of Theorem \ref{second-main-theorem}, we find that $\xi_{j,s_k}$ is uniformly bounded in $\mathbb{H}^0(\Omega)$ for $j=1,\dots,n$. Therefore, after passing to a subsequence, there exists $\xi_j \in\mathbb{H}^0(\Omega)$ such that $\xi_{j,s_k}\rightharpoonup \xi_j$ in $\mathbb{H}^0(\Omega)$ for $j =1,\dots,n$, which by Proposition \ref{compact-embedding} implies that $\xi_{j,s_k}\rightarrow \xi_j$ in $L^2(\Omega)$ for $j=1,\dots,n$.

The $L^2$-convergence implies that the functions $\xi_1,\dots,\xi_n$ are also $L^2$-orthonormal. Moreover, for $j=1,\cdots,n,$ we have, by Theorem \ref{first-main-result} and Remark \ref{density-result},  
\begin{align}\nonumber
  \lambda^*_{j} \langle \xi_j,\phi\rangle_{2} &=  \lim\limits_{k\rightarrow\infty}\lambda^{\Omega}_{j,s_k}\langle\xi_{j,s_k},\phi\rangle_{2}=\lim\limits_{k\rightarrow\infty}\cE_{s_k}(\xi_{j,s_k},\phi)\\
                                              &=\lim\limits_{k\rightarrow\infty}\langle\xi_{j,s_k},\cD^{s_k}_{\Omega}\phi\rangle_{2}=\langle \xi_j,\cD^0_{\Omega}\phi\rangle_{2}=\cE_0(\xi_j,\phi)
\qquad \text{for $\phi\in C^2_*(\overline{\Omega})$.}
                                                \label{toward-liminf}
\end{align}
By density of $C^2_{*}(\overline{\Omega})$ in $\mathbb{X}^0(\Omega)$, we thus have 
$$	\cE_0(\xi_j,\phi)=\lambda^{*}_j\langle \xi_j,\phi\rangle_{2}~~\text{for all}~~\phi\in\mathbb{X}^0(\Omega), \: j=1,\dots,n.
$$
Therefore, $\lambda^{*}_j$ is an eigenvalue of $\cD^0_{\Omega}$ with corresponding eigenfunction $\xi_j$ for $j=1,\dots,n.$ Now, by considering in particular the $n$-dimensional subspace $V=\text{span}\{\xi_1,\xi_2,\dots,\xi_n\}$ of $\mathbb{X}^0(\Omega)$ in \eqref{eq:var-char-n}, it follows that
\begin{equation}\label{uper-bound-of-lambda-k}
\lambda^{\Omega}_{n,0}\leq\sup_{u\in S_V}\cE_0(u,u).
\end{equation}
Moreover, every $u\in S_V$ writes as $u=\sum \limits_{j=1}^{n}c_j\xi_j$ with $c_j \in \R$ satisfying $\sum \limits_{j=1}^{n}c^2_j=1$, so we have 
$$
\cE_0(u,u)=\cE_0\Big(\sum_{j=1}^{n}c_j\xi_j,\sum_{j=1}^{n}c_j\xi_j\Big)=\sum_{i,j=1}^{n}c_ic_j\lambda^{*}_{i}\langle \xi_i,\xi_j\rangle_{2} =\sum_{i=1}^{n}c^2_i\lambda^{*}_{i}\leq \lambda^{*}_{n}\sum_{i=1}^{n}c^2_i = \lambda^{*}_{n}
$$
by \eqref{lim-sup-lambda-n-1}. Hence \eqref{uper-bound-of-lambda-k} yields that 
\begin{equation}\label{limit-inf-lambda-n}
\lambda^{\Omega}_{n,0}\leq\lambda^{*}_{n} = \liminf_{s\rightarrow0^+}\lambda^{\Omega}_{n,s}
\end{equation}
Combining \eqref{lim-sup-lambda-n} and \eqref{limit-inf-lambda-n} now shows that $\lambda^{\Omega}_{n,s}\to\lambda^{\Omega}_{n,0}$ as $s\to0^+$, as claimed in \eqref{limit-lambda-n-s-}. The rest of the proof follows exactly as in the case of Theorem \ref{second-main-theorem}.
\end{proof}

Next, we wish to study the uniform convergence of sequences of eigenfunctions of $\cD^{s_k}_{\Omega}$ associated with a sequence $s_k \to 0^+$. We first state a uniform equicontinuity result in a somewhat more general setting.

\begin{thm}
\label{thm-equicontinuity}  
Let $\Omega \subset \R^N$ be a bounded Lipschitz set. Moreover, let $(s_k)_k$ be a sequence in $(0,1)$ with $s_k \to 0^+$, and let $\phi_k \in C(\overline \Omega)$, $k \in \N$ be functions with 
\begin{equation}
\label{bound-diff-operator}  
\|\phi_k\|_{L^\infty(\Omega)} \le C \qquad \text{and}\qquad   \Bigl|  \int_{\Omega} \frac{\phi_{k}(x)-\phi_{k}(y)}{|x-y|^{N+2s_k}}dy\Bigr| \le C \quad \text{for all $x \in \overline{\Omega}$, $k \in  \N$}
\end{equation}
with a constant $C>0$. Then the sequence $(\phi_{k})_k$ is equicontinuous.  
\end{thm}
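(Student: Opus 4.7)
The plan is to argue by contradiction. Suppose $(\phi_{k})_k$ is not equicontinuous on $\overline{\Omega}$. Then, after passing to a subsequence, there exist $\epsilon_0>0$ and points $x_k,y_k \in \overline{\Omega}$ with $r_k := |x_k-y_k| \to 0$ and, without loss of generality, $\phi_k(x_k)-\phi_k(y_k)\ge \epsilon_0$. By compactness, one may further assume $x_k,y_k \to x_0 \in \overline{\Omega}$.

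The preparatory step is to localize the hypothesis (\ref{bound-diff-operator}). For every fixed $\rho \in (0,1)$ and every $x \in \overline{\Omega}$, the tail estimate
$$
\Bigl|\int_{\Omega \setminus B_\rho(x)} \frac{\phi_k(x)-\phi_k(y)}{|x-y|^{N+2s_k}}\,dy\Bigr| \le 2\|\phi_k\|_{L^\infty(\Omega)}|\Omega|\, \rho^{-N-2s_k} \le C_\rho
$$
holds uniformly in $k$, since $s_k \to 0^+$ keeps $\rho^{-N-2s_k}$ uniformly bounded. Combining this with (\ref{bound-diff-operator}) yields a uniform control on the local part of the operator:
$$
\Bigl|\int_{B_\rho(x) \cap \Omega} \frac{\phi_k(x)-\phi_k(y)}{|x-y|^{N+2s_k}}\,dy\Bigr| \le C'_\rho, \qquad k \in \N,\ x \in \overline{\Omega}.
$$

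The core of the argument is a relative oscillation estimate of De Giorgi type: there exist $\theta \in (0,1)$ and a modulus $E(\rho) \to 0$ as $\rho \to 0^+$, both independent of $k$, such that for every $z \in \overline{\Omega}$ and every sufficiently small $\rho>0$,
$$
\osc_{B_{\rho/2}(z)\cap\overline{\Omega}} \phi_k \le \theta \cdot \osc_{B_\rho(z)\cap\overline{\Omega}} \phi_k + E(\rho).
$$
The strategy is to apply the localized bound above to $\pm \phi_k$ and then perform a measure-theoretic dichotomy on the two level sets $\{\phi_k \ge M_k(z,\rho)-\tfrac12\osc_{B_\rho(z)}\phi_k\}$ and $\{\phi_k \le m_k(z,\rho)+\tfrac12\osc_{B_\rho(z)}\phi_k\}$, where $M_k(z,\rho)$ and $m_k(z,\rho)$ denote the $\sup$ and $\inf$ of $\phi_k$ on $B_\rho(z)\cap\overline{\Omega}$: one of these sets must occupy a substantial fraction of $B_{\rho/2}(z)$, and the singularity of $|x-y|^{-N-2s_k}$ near the diagonal converts this density information into pointwise control of $\phi_k$ via the uniform integral bound. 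Iterating the estimate from a fixed scale $\rho_0$ down to scale $r_k$ produces $\osc_{B_{\rho_k}(x_0)\cap\overline{\Omega}} \phi_k \to 0$ for any sequence $\rho_k$ with $\rho_k \ge r_k+|x_k-x_0|+|y_k-x_0|$, contradicting $\phi_k(x_k)-\phi_k(y_k)\ge \epsilon_0$ for $k$ large.

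The main obstacle is producing the relative oscillation estimate with constants that are uniform in $s_k$ as $s_k \to 0^+$. Standard nonlocal regularity tools (Kassmann, Caffarelli--Silvestre, and related) yield constants that degenerate in this limit, so they cannot be invoked as black boxes. Instead, the uniformity must be extracted by hand, exploiting the specific limiting structure $|x-y|^{-N-2s_k} \to |x-y|^{-N}$ together with the localized integral bound from the preparatory step, so that the argument reduces essentially to an oscillation estimate for the regional logarithmic Laplacian plus an $s_k$-dependent remainder that vanishes in the limit.
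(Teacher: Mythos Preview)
Your proposal identifies the right difficulty but does not actually surmount it. The entire argument hinges on the relative oscillation estimate
\[
\osc_{B_{\rho/2}(z)\cap\overline{\Omega}} \phi_k \le \theta \cdot \osc_{B_\rho(z)\cap\overline{\Omega}} \phi_k + E(\rho)
\]
with $\theta<1$ and $E(\rho)\to 0$ \emph{uniformly in $k$}, and you state this as a claim together with a one-sentence heuristic (``measure-theoretic dichotomy on level sets''), then immediately concede that producing it with $s_k$-independent constants is ``the main obstacle'' which ``must be extracted by hand.'' That is precisely the content of the theorem; asserting it is not a proof. In fact there is reason to doubt that a uniform contraction factor $\theta<1$ exists at all in this regime: the limiting operator $\cD^0_\Omega$ does not produce H\"older regularity but only log-H\"older type moduli, so a geometric-decay oscillation lemma of De~Giorgi type with fixed $\theta$ is not the natural framework as $s_k\to 0^+$. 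Your final sentence, reducing to ``an oscillation estimate for the regional logarithmic Laplacian plus an $s_k$-dependent remainder,'' pushes the problem to the $s=0$ case without solving it there either.

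The paper proceeds quite differently and avoids any scale-by-scale oscillation decay. Assuming failure of equicontinuity at a point $x_0$, it first observes that $t\mapsto \sup_k \osc_{B_t(x_0)\cap\overline\Omega}\phi_k$ decreases to a positive limit $\eps$, so the oscillation is \emph{two-sidedly} pinned near $\eps$ on all small balls. It then selects radii $t_k\to 0$ with $\inf_k t_k^{s_k}>0$, adds a small Lipschitz bump $\psi_k$ supported in $B_{2t_k}$ to $\phi_k$, and evaluates the localized integral bound at the interior maximum $x_k$ of $\phi_k+\psi_k$. The bump controls the near-diagonal contribution, while the annular contribution over $B_{t_0}\setminus B_{3t_k}$ is bounded below by a positive multiple of $c_k=\int_{\Omega\cap(B_{t_0}\setminus B_{3t_k})}|y|^{-N-2s_k}\,dy$, which diverges by the Lipschitz cone property (Lemma~\ref{cone-property}). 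This yields the contradiction directly, with no iteration and no $\theta$.
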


\begin{proof}
Since $s_k \to 0^+$, we may assume, without loss of generality, that $s_k \in (0,\frac{1}{4})$ for every $k \in \N$. Moreover, relabeling the functions $\phi_k$ if necessary, we may assume that the sequence $s_k$ is monotone decreasing.  Arguing by contradiction, we assume that there exists a point $x_0 \in \overline{\Omega}$ such that the sequence $(\phi_k)_k$ is not equicontinuous at $x_0$, which means that 
  \begin{equation}
    \label{eq:positive-osc-limit}
\lim_{t \to 0^+} \sup_{k \in \N} \:\underset{B_t(x_0)\cap \ov\Omega}{\osc}\: \phi_k= \eps>0.
  \end{equation}
  This limit exists since the function
$$
(0,\infty) \to [0,\infty), \qquad t \mapsto \sup_{k \in \N} \:\underset{B_t(x_0)\cap \ov\Omega}{\osc}\: \phi_k
$$
is bounded by assumption and nondecreasing. Without loss of generality, to simplify the notation, we may assume that $x_0 = 0 \in \overline{\Omega}$.
We first choose $\delta>0$ sufficiently small so that 
\begin{equation}
  \label{eq:delta-small-condition}
\frac{\eps-\delta}{2^{N+2}}-2 \cdot 3^{N} \delta > 0.
\end{equation}
We then choose $t_0 \in (0,1)$ sufficiently small so that
\begin{equation}
    \label{eq:positive-osc-limit-cons}
\eps \le \sup_{k \in \N} \:\underset{B_t\cap \ov\Omega}{\osc}\: \phi_k  \le \eps +\delta  \qquad \text{for $0 < t \le {2t_0}$.}
\end{equation}
From (\ref{bound-diff-operator}) and the assumption that the sequence $(\phi_k)_k$ is uniformly bounded in $\overline{\Omega}$, it follows that there exists a constant $C_1=C_1(t_0)>0$ with
\begin{equation}
\label{bound-diff-operator-1}  
  \Bigl|  \int_{B_{t_0}(x)\cap \Omega} \frac{\phi_k(x)-\phi_k(y)}{|x-y|^{N+2s_k}}dy\Bigr| \le C_1 \qquad \text{for all $x \in \overline \Omega$, $k \in  \N$.}
\end{equation}
Next, we choose a sequence of numbers $t_k \in (0,\frac{t_0}{5})$ with $t_k \to 0^+$ and
\begin{equation}
  \label{eq:t_n-power-s-n-est}
C_2:= \inf_{k \in \N} t_k^{s_k} >0.
\end{equation}
We then define a strictly increasing sequence of numbers $\sigma_k$, $k \in \N$ inductively with the property that
\begin{equation}
  \label{eq:eps-half-lower-bound-preliminary}
 \underset{B_{t_k}\cap \ov\Omega}{\osc}\, \phi_{\sigma_k} \ge \eps-\delta  \qquad \text{for all $k \in \N$.}
\end{equation}
For this, we first note that (\ref{eq:positive-osc-limit-cons}) implies that
there exists some $\sigma_1 \in \N$ with
$$
\underset{B_{t_1} \cap \ov\Omega}{\osc}\, \phi_{\sigma_1} \ge \eps-\delta.
$$
Next, suppose that $\sigma_1< \dots < \sigma_k$ are already defined for some $k \in \N$. Since the finite set of functions $\{\phi_{\sigma_1},\dots,\phi_{\sigma_k}\}$ is equicontinuous on $\overline \Omega$ by assumption, there exists $t' \in (0,t_{k+1})$ with the property that
$$
 \underset{B_{t'}\cap \ov\Omega}{\osc}\, \phi_{\ell} < \eps-\delta  \qquad \text{for $\ell = \sigma_1,\dots,\sigma_k$.}
$$
Hence, by (\ref{eq:positive-osc-limit-cons}), there exists some $\sigma_{k+1} \in \N$, $\sigma_{k+1}>\sigma_k$ with 
$$
\eps-\delta \le \underset{B_{t'}\cap \ov\Omega}{\osc}\, \phi_{\sigma_{k+1}} \le
\underset{B_{t_{k+1}}\cap \ov\Omega}{\osc }\, \phi_{\sigma_{k+1}}.
$$
With this inductive choice, (\ref{eq:eps-half-lower-bound-preliminary}) holds for all $k \in \N$. Moreover, since $\sigma_k \ge k$ and therefore $s_{\sigma_k} \le s_k$, we have $t_k^{s_{\sigma_k}} \ge t_k^{s_k} \ge C_2$ for every $k \in \N$ by (\ref{eq:t_n-power-s-n-est}) and since $t_k \in (0,1)$. Hence we may pass of a subsequence, replacing $s_k$ by $s_{\sigma_k}$ and $\phi_k$ by $\phi_{\sigma_k}$ in the following, with the property that (\ref{eq:t_n-power-s-n-est}) still holds and  
\begin{equation}
  \label{eq:eps-half-lower-bound}
\eps-\delta \le \underset{B_{t_k}\cap \ov\Omega}{\osc}\, \phi_k \le \eps +\delta  \qquad \text{for all $k \in \N$.}
\end{equation}
By \eqref{eq:eps-half-lower-bound}, we may write 
    \begin{equation}
      \label{eq:introduce-d-n}
\phi_k(\overline{B_{t_k}\cap \Omega}) = [d_k-r_k,d_k+r_k] \qquad \text{for $k \in \N$ with some $d_k \in \R$, $r_k \ge \frac{\eps-\delta}{2}$.}
    \end{equation}
Together with \eqref{eq:positive-osc-limit-cons} and the fact that $\overline{B_{t_k} \cap \Omega} \subset \overline{B_{{2t_0}} \cap \Omega}$, we deduce that 
    \begin{equation}
      \label{eq:B-t-0-interval-est}
\phi_k(\overline{B_{{2t_0}}\cap \Omega}) \subset [d_k-\frac{\eps+3\delta}{2}\:,\:d_k+\frac{\eps+3\delta}{2}].
    \end{equation}
Moreover, we let
$$
c_k:= \int_{\Omega \cap (B_{t_0} \setminus B_{3t_k})}|y|^{-N-2s_k}\,dy \qquad \text{for $k \in \N$,}
$$
and we note that
\begin{equation}
  \label{eq:c-n-infty}
c_k \to \infty\qquad \text{as $k \to \infty$}
\end{equation}
by Lemma~\ref{cone-property}. We now set
$$
A_+^k : = \{y \in \Omega \cap (B_{{t_0}}\setminus B_{3 t_k})\::\: \phi_k(y) \ge d_k\}\quad \text{and}\quad 
A_-^k : = \{y \in \Omega \cap (B_{{t_0}}\setminus B_{3 t_k})\::\: \phi_k(y) \le d_k\}.
$$
Since
$$
c_k \le \int_{A_+^k} |y|^{-N-2s_k}\,dy + \int_{A_-^k} |y|^{-N-2s_k}\,dy \qquad \text{for all $k \in \N$,}
$$
we may again pass to a subsequence such that
$$
\int_{A_+^k} |y|^{-N-2s_k}\,dy \ge \frac{c_k}{2} \quad \text{for all $k \in \N$}\qquad \text{or}\qquad
\int_{A_-^k} |y|^{-N-2s_k}\,dy \ge \frac{c_k}{2} \quad \text{for all $k \in \N$.}
$$
Without loss of generality, we may assume that the second case holds (otherwise we may replace $\phi_k$ by $-\phi_k$ and $d_k$ by $-d_k$). 
We then define the Lipschitz function $\psi_k \in C_c(\R^N)$ by
$$
\psi_k (x) = \left \{
  \begin{aligned}
    &2 \delta ,&&\qquad |x| \le t_k\\
    &0,&& \qquad |x| \ge 2 t_k\\
    &\frac{2 \delta}{t_k}(2 t_k - |x|),&& \qquad t_k \le |x| \le 2 t_k. 
  \end{aligned}
\right.
$$
We also define, for $k \in \N$, 
$$
\tau_k: \overline{\Omega} \to \R, \qquad \tau_k(x)= \phi_k(x) + \psi_k(x)
$$
By \eqref{eq:B-t-0-interval-est}, we have 
$$
\tau_k = \phi_k \le d_k + \frac{\eps+3\delta}{2}\le d_k + r_k +2 \delta \qquad \text{in $\overline{\Omega \cap (B_{{2t_0}} \setminus B_{2t_k})}$.}
$$
Moreover, since $d_k+r_k \in \phi_k(\overline{B_{t_k} \cap \Omega})$ by \eqref{eq:introduce-d-n}, we have 
$$
d_k+r_k + 2 \delta  \in \tau_k(\overline{B_{t_k} \cap \Omega}) \subset \tau_k(B_{2t_k} \cap \overline{\Omega}).
$$
Consequently, $\max \limits_{\overline{B_{2t_0}\cap \Omega}}\, \tau_k$ is attained at a point $x_k \in B_{2t_k} \cap \overline{\Omega}$  with
$$
\tau_k(x_k) \ge d_k+r_k + 2 \delta 
$$
which implies that 
\begin{equation}
  \label{eq:phi-n-x-n-lower-bound}
\phi_k(x_k) \ge d_k +r_k \ge d_k + \frac{\eps-\delta}{2}.
\end{equation}
By \eqref{bound-diff-operator-1} and since $B_{3t_k} \cap \Omega \subset B_{t_0}(x_k) \cap \Omega$ for $k \in \N$ by construction, we have that
\begin{align}
  C_1 &\ge  \int_{B_{t_0}(x_k)\cap \Omega} \frac{\phi_k(x_k)-\phi_k(y)}{|x_k-y|^{N+2s_k}}dy \nonumber\\
      &= \int_{B_{3 t_k}\cap \Omega} \frac{\phi_k(x_k)-\phi_k(y)}{|x_k-y|^{N+2s_k}}dy +
      \int_{\Omega \cap  (B_{t_0}(x_k) \setminus B_{3 t_k})} \frac{\phi_k(x_k)-\phi_k(y)}{|x_k-y|^{N+2s_k}}dy. \label{C-1-first-est}
\end{align}
To estimate the first integral, we note that, by definition of the function $\psi_k$,
$$
|\psi_k(x)-\psi_k(y)| \le \frac{2 \delta}{t_k}|x-y| \qquad \text{for all $x,z \in \R^N$.}
$$
Moreover, by the choice of $x_k$ we have $\tau_k(x_k) \ge \tau_k(y)$ for all $y \in B_{3t_k} \cap \Omega$. Consequently,
\begin{align}
  &\int_{B_{3 t_k}\cap \Omega} \frac{\phi_k(x_k)-\phi_k(y)}{|x_k-y|^{N+2s_k}}dy= \int_{B_{3 t_k}\cap \Omega} \frac{\tau_k(x_k)-\tau_k(y)}{|x_k-y|^{N+2s_k}}dy-
    \int_{B_{3 t_k}\cap \Omega} \frac{\psi_k(x_k)-\psi_k(y)}{|x_k-y|^{N+2s_k}}dy \nonumber\\
    &\ge - \int_{B_{3 t_k}\cap \Omega} \frac{\psi(x_k)-\psi(y)}{|x_k-y|^{N+2s_k}}dy
      \ge - \frac{2 \delta}{t_k}\int_{B_{3 t_k}} |x_k-y|^{1-N-2s_k}dy \ge - \frac{2 \delta}{t_k}\int_{B_{3 t_k}} |y|^{1-N-2s_k}dy
      \nonumber\\
  &= - \frac{3^{1-2s_k} \omega_{N-1} 2 \delta t_k^{-2s_k}}{1-2s_k} \ge  - 12 \omega_{N-1} \delta t_k^{-2s_k} \ge -C_3
 \label{C-3-est}   
\end{align}
with a constant $C_3>0$ independent of $k$. Here we used \eqref{eq:t_n-power-s-n-est} and the standard estimate 
        \begin{equation*}
       \int_{B_t}|x-z|^{\rho-N}\,dz \le \int_{B_t}|z|^{\rho-N}\,dz = \frac{\omega_{N-1}t^\rho }{\rho} \qquad \text{for every $t>0$, $\rho \in (0,N)$ and $x \in \R^N$.}
        \end{equation*}
To estimate the second integral in \eqref{C-1-first-est} we first note, since $x_k \in B_{2t_k}$, we have that 
$$
2|y| \ge  |y-x_k| \ge \frac{|y|}{3} \qquad \text{for every $k \in \N$ and $y \in \R^N \setminus B_{3 t_k}$.}
$$
Moreover, by \eqref{eq:positive-osc-limit-cons}, \eqref{eq:B-t-0-interval-est}, and \eqref{eq:phi-n-x-n-lower-bound} we have 
$$
\eps +\delta \ge \phi_k(x_k)-\phi_k(y) \ge d_k + \frac{\eps-\delta}{2} - \phi_k(y) \ge -2 \delta 
$$
for $y \in B_{t_0}(x_k)\cap \Omega \subset B_{2t_0} \cap \Omega$. Consequently, combining \eqref{C-1-first-est} and \eqref{C-3-est}, using again \eqref{eq:phi-n-x-n-lower-bound} and the fact that $x_k \in B_{2t_k}$, we may estimate as follows:
\begin{align*}
&C_1 + C_3\ge  \int_{(B_{t_0}(x_k) \setminus B_{3 t_k})\cap \Omega} \frac{\phi_k(x_k)-\phi_k(y)}{|y-x_k|^{N+2s_k}}dy\\
  &\ge   \int_{(B_{t_0}(x_k) \setminus B_{3 t_k})\cap \Omega} \frac{[\phi_k(x_k)-\phi_k]^+(y)}{|y-x_k|^{N+2s_k}}dy - 2 \delta \int_{(B_{t_0}(x_k) \setminus B_{3 t_k})\cap \Omega}\!\! |y-x_k|^{-N-2s_k}dy  \\
  &\ge   \frac{1}{2^{N+2s_k}}\!\int_{(B_{t_0}(x_k) \setminus B_{3 t_k})\cap \Omega}\!\! \frac{[\phi_k(x_k)-\phi_k]^+(y)}{|x_k-y|^{N+2s_k}}dy - 2 \!\cdot \! 3^{N+2s_k} \delta \!\! \int_{(B_{t_0}(x_k) \setminus B_{3 t_k})\cap \Omega} |y|^{-N-2s_k}dy  \\
           &\ge   \frac{1}{2^{N+2s_k}}\Bigl(\int_{(B_{t_0} \setminus B_{3 t_k}) \cap \Omega} \frac{[\phi_k(x_k)-\phi_k]^+(y)}{|y|^{N+2s_k}}dy - \int_{(B_{t_0} \setminus B_{t_0}(x_k))\cap \Omega} \frac{[\phi_k(x_k)-\phi_k]^+(y)}{|y|^{N+2s_k}}dy\Bigr)\\
 &\qquad- 2 \,\cdot \,3^{N+2s_k} \delta \,\Bigl( \int_{(B_{t_0} \setminus B_{3 t_k})\cap \Omega} |y|^{-N-2s_k}dy +\int_{(B_{t_0}(x_k) \setminus B_{t_0})} |y|^{-N-2s_k}dy \Bigr)  \\
           &\ge   \frac{1}{2^{N+2s_k}}\Bigl(r_k \int_{A_k^-}|y|^{-N-2s_k} dy - (\eps+\delta)\int_{B_{t_0} \setminus B_{t_0}(x_k)} |y|^{-N-2s_k}dy\Bigr) \\
&\qquad- 2 \cdot 3^{N+2s_k} \delta \Bigl(c_k + \int_{B_{t_0}(x_k) \setminus B_{t_0}} |y|^{-N-2s_k}dy\Bigr)\\
           &\ge   \Bigl(\frac{r_k}{2 \cdot 2^{N+2s_k}}-2 \cdot 3^{N+2s_k} \delta\Bigr)c_k\\
 &\qquad  - \frac{(\eps+\delta)}{2^{N+2s_k}}\!\int_{B_{t_0} \setminus B_{t_0-2t_k}} |y|^{-N-2s_k}dy\:-\: 2\cdot3^{N+2s_k} \delta \!\int_{B_{t_0+2t_k} \setminus B_{t_0}} |y|^{-N-2s_k}dy\\
           &\ge   \Bigl(\frac{\eps-\delta}{2^{N+2+2s_k}}-2 \cdot 3^{N+2s_k} \delta\Bigr)c_k - o(1) = \Bigl(\frac{\eps-\delta}{2^{N+2}}-2 \cdot 3^{N} \delta \,+\, o(1)\Bigr)c_k - o(1)
\end{align*}
as $k \to \infty$, where we used \eqref{eq:introduce-d-n}. By our choice of $\delta>0$ satisfying \eqref{eq:delta-small-condition}, we arrive at a contradiction to \eqref{eq:c-n-infty}. 
The proof is thus finished.
\end{proof}

Finally, we complete the

\begin{proof}[Proof of Theorem~\ref{second-main-theorem-reformulation-intro}]
  Since $c_{N,s}:=s c_N + o(s)$ as $s\to0^+$ with $c_N=\pi^{-\frac{N}{2}}\Gamma(\frac{N}{2})$ and $L^{\Omega}_{\Delta}=c_{N}\cD^0_{\Omega}$, then the first part of Theorem \ref{second-main-theorem-reformulation-intro} is just a reformulation of  Theorems~\ref{second-main-theorem} and \ref{main-result-for-lambda-n}.

  To see the second part, we first note that $\xi_{n,s_k} \in C(\overline \Omega)$ for every $k \in \N$ by \cite[Theorem 1.3, see also Theorem 4.7]{audrito2020neumann}. We may then apply Theorem~\ref{thm-equicontinuity} to the sequence $(\xi_{n,s_k})_k$ in place of $(\phi_k)_k$, noting that assumption~(\ref{bound-diff-operator}) is satisfied by Theorems~\ref{uniform-bound-eigenvalues} and \ref{global-bound-of-eigenfunctions-of-l-delta}. Consequently, the sequence $(\xi_{n,s_k})_k$ is both bounded in $C(\overline \Omega)$ and equicontinuous on $\overline \Omega$, so it is relatively compact in $C(\overline \Omega)$ by the Arzel\`a-Ascoli Theorem. Combining this fact with the convergence property $\xi_{n,s_k} \to \xi_n$ in $L^2(\Omega)$ stated in Theorem~\ref{main-result-for-lambda-n}, it follows that
$\xi_{n,s_k} \to \xi_n$ in $C(\overline \Omega)$.  
\end{proof}

\bibliographystyle{ieeetr}

\end{document}